\theoremstyle{thmstyleone}%
\newtheorem{theorem}{Theorem}
\newtheorem{proposition}[theorem]{Proposition}%
\newtheorem{lemma}[theorem]{Lemma}
\theoremstyle{thmstyletwo}%
\newtheorem{example}{Example}%
\theoremstyle{thmstylethree}%
\newtheorem{definition}{Definition}%
\begin{document}

\title[Logarithmic Schr\"{o}dinger equations on locally finite graphs]{Ground states for logarithmic Schr\"{o}dinger equations on locally finite graphs}


\author*[1]{\fnm{Xiaojun} \sur{Chang}}\email{changxj100@nenu.edu.cn}
\equalcont{These authors contributed equally to this work.}

\author[1]{\fnm{Ru} \sur{Wang}}\email{wangr076@nenu.edu.cn}
\equalcont{These authors contributed equally to this work.}

\author[2]{\fnm{Duokui} \sur{Yan}}\email{duokuiyan@buaa.edu.cn}
\equalcont{These authors contributed equally to this work.}

\affil[1]{\orgdiv{School of Mathematics and Statistics \& Center for Mathematics and Interdisciplinary Sciences}, \orgname{Northeast Normal University},  \city{Changchun}, \postcode{130024},  \country{China}}

\affil[2]{\orgdiv{School of Mathematical Sciences}, \orgname{Beihang University},  \city{Beijing}, \postcode{100191}, \country{China}}


\abstract{In this paper, we study the following logarithmic Schr\"{o}dinger equation
\[
-\Delta u+a(x)u=u\log u^2\ \ \ \ \mbox{in }V,
\]
where $\Delta$ is the graph Laplacian, $G=(V,E)$ is a connected locally finite graph, the potential $a: V\to \mathbb{R}$ is bounded from below and may change sign. We first establish two Sobolev compact embedding theorems in the case when different assumptions are imposed on $a(x)$. It leads to two kinds of associated energy functionals, one of which is not well-defined under the logarithmic nonlinearity, while the other is $C^1$.
The existence of ground state solutions are then obtained by using the Nehari manifold method and the mountain pass theorem respectively.}

\keywords{Ground state solutions, logarithmic Schr\"{o}dinger equations, locally finite graphs, variational methods.}


\pacs[MSC Classification]{35A15, 35R02, 35Q55, 39A12}

\maketitle

\section{Introduction and main results}\label{sec1}

In this paper, we are interested in the following logarithmic Schr\"{o}dinger equation
\begin{equation}\label{eq-eq1}
-\Delta u+a(x)u=u\log u^2 \quad \mbox{in }V
\end{equation}
on a connected locally finite graph $G=(E,V)$, where $V$ denotes the vertex set and $E$ denotes the edge set. A graph is said to be \textit{locally finite} if for any $x\in V$, there are only finite $y\in V$ such that $xy\in E$. A graph is \textit{connected} if any two vertices $x$ and $y$ can be connected via finite edges. For any edge $xy\in E$, we assume the weight $\omega_{xy}$ satisfies $\omega_{xy}=\omega_{yx}>0$. The \textit{degree} of $x\in V$ is defined as $ deg(x)=\sum_{y\sim x}\omega_{xy}$, where we write $y\sim x$ if $xy\in E$. The \textit{distance} $d(x,y)$ of two vertices $x,y\in V$ is defined by the minimal number of edges which connect $x$ and $y$. The measure $\mu:V\to\mathbb{R}^+$ on the graph is a finite positive function on $G$. For any function $u:V\to\mathbb{R}$, the \textit{graph Laplacian} of $u$ is defined by
\begin{equation}\label{Laplacian}
  \Delta u(x)=\frac{1}{\mu(x)}\sum_{y\sim x}\omega_{xy}\left(u(y)-u(x)\right).
\end{equation}

In recent years, the study of partial differential equations on graphs has drawn much attention, see \cite{BSW2022,Huang2012,Lin2017-1,HLY2020,HS2022,Ge2018-1,Grigor'yan2016-2,G2018,Ge2018-2,Grigor'yan2016-1,Grigor'yan2017} and references therein. Specially in \cite{Grigor'yan2017}, Grigor'yan, Lin and Yang studied the following nonlinear Schr\"odinger equation
\begin{equation}\label{eq-f}
  -\Delta u+b(x)u=f(x,u) \quad \mbox{in }V
\end{equation}
on a connected locally finite graph $G$, where the potential $b: V\to \mathbb{R}^+$ satisfies $b(x)\ge b_0$ for some constant $b_0>0$, and one of the following hypotheses holds:
\begin{description}
\item[$(B_1)$] $b(x)\to+\infty$ as $d(x,x_0) \to+\infty$ for some fixed $x_0\in V$;
\item[$(B_2)$]$1/b(x)\in L^1(V)$.
\end{description}
When $f$ satisfies the so-called Ambrosetti-Rabinowitz ((AR) for short) condition:
\begin{description}
\item[$(f_1)$]there exists a constant $\theta>2$ such that for all $x\in V$ and $s>0$,
\begin{eqnarray}\label{AR}
0<\theta F(x,s)=\theta \int_0^s f(x,t)dt\le sf(x,s),
\end{eqnarray}
\end{description}
and some additional assumptions, they applied the mountain pass theorem to show that equation (\ref{eq-f}) admits strictly positive solutions. In \cite{Zhang2018}, Zhang and Zhao established the existence and convergence (as $\lambda\to+\infty$) of ground state solutions for equation (\ref{eq-f}), when $b(x)=\lambda a(x)+1$ and $f(x,u)=\vert u\vert^{p-1}u$, where $a(x)\ge0$ and the potential well $\Omega=\{x\in V: a(x)=0\}$ is a non-empty connected and bounded domain in $V$. One may find more related works in \cite{HSZ2020,LY2022,XZ2021} and their references.

On the other hand, this type of problem has been studied extensively in the Euclidean space, see for example \cite{Ambrosetti1997,AM2006,AMF2005,Bartsch2001,Bartsch1995,CPS2013,Li2006,Rabinowitz1992,Willem1996}.
In recent years, the logarithmic Schr\"{o}dinger equation
\begin{equation}\label{eq-log}
 -\Delta u+b(x)u=u\log u^2\ \ \mbox{in }\mathbb{R}^N
\end{equation}
has attracted much interest. It is closely related to the time-dependent logarithmic Schr\"{o}dinger equation
\[
i\frac{\partial u}{\partial t}-\Delta u+b(x)u-u\log u^2=0\ \ \mbox{in }\mathbb{R}^N\times\mathbb{R}_+,
\]
which has wide applications in quantum mechanics, quantum optics, nuclear physics, transport and diffusion phenomena, open quantum systems, effective quantum gravity, theory of superfluidity and Bose-Einstein condensation, see for example \cite{Carles2018,Cazenave2003,Zloshchastiev2010}.

In fact, there are several challenges when studying the logarithmic Schr\"{o}dinger equation. One of the main challenges is that the associated energy functional of equation (\ref{eq-log}) is not well defined in $H^1(\mathbb{R}^N)$ because there exists $u\in H^1(\mathbb{R}^N)$ such that $\int_{\mathbb{R}^N}u^2\log u^2dx=-\infty$. Different approaches have been developed to overcome this technical difficulty. Cazenave \cite{Cazenave1983} worked in an Orlicz space endowed with a Luxemburg type norm in order to make the functional to be $C^1$. Squassina and Szulkin \cite{Squassina2015} studied the existence of multiple solutions by using non-smooth critical point theory (see also \cite{d'Avenia2014,d'Avenia2015,Ji2016}). Tanaka and Zhang \cite{TZ2017} applied the penalization technique to study multi-bump solutions of equation (\ref{eq-log}). For the idea of penalization, see also Guerrero et al. \cite{Guerrero2010}.
In \cite{Wang2019}, Wang and Zhang proved that the ground state solutions of the power-law scalar field equations $-\Delta u+\lambda u=\vert u\vert^{p-2}u$, as $p\downarrow 2$, converge to the ground state solutions of the logarithmic-law equation $-\Delta u=\lambda u\log u^2$.
By using the direction derivative and constrained minimization method, Shuai \cite{Shuai2019} obtained the existence of positive and sign-changing solutions of equation (\ref{eq-log}) under different types of potentials. For more studies on logarithmic Schr\"odinger equations, one may refer to \cite{AJ2022,AJ2020,C2022,Cazenave1982,ITWZ2021,Shuai2021,ZZ2020} and their references.

The goal of this paper is to investigate the logarithmic Schr\"{o}dinger equation on connected locally finite graphs. The problem (\ref{eq-eq1}) discussed in this paper can be viewed as the discrete model of equation (\ref{eq-log}). By developing variational methods on discrete graphs and establishing new embedding results, we shall obtain the existence of ground state solutions of equation (\ref{eq-eq1}).

 Throughout this paper, we always assume that there exists some constant $\mu_{\min}>0$ such that the measure $\mu(x)\geq\mu_{\min}>0$ for all $x\in V$. For the potential $a(x)$, we assume that the following assumptions hold:
\begin{description}
\item[$(A_1)$] $a: V\to \mathbb{R}$ satisfies $\inf\limits_{x\in V}a(x)\ge a_0$ for some constant $a_0\in(-1, 0)$;
\item[$(A_2)$] for every $M>0$ such that the volume of the set $D_M$ is finite, namely,
\[
Vol(D_M)=\sum\limits_{x\in D_M}\mu(x)<\infty,
\]
where $D_M=\{x\in V:  a(x)\leq M\}$.
\end{description}

Our first result is as follows.
\begin{theorem}\label{th1.1}
Let $G=(V,E)$ be a connected locally finite graph. Assume the potential $a:V\to \mathbb{R}$  satisfies $(A_1)$ and $(A_2)$. Then the problem (\ref{eq-eq1}) admits a ground state solution.
\end{theorem}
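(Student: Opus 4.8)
The plan is to realize the solutions of (\ref{eq-eq1}) as critical points of the energy functional
\[
J(u)=\frac12\int_V\big(|\nabla u|^2+(a(x)+1)u^2\big)\,d\mu-\frac12\int_V u^2\log u^2\,d\mu=:\frac12\|u\|^2-\frac12\int_V u^2\log u^2\,d\mu,
\]
on the Hilbert space $H=\{u:\|u\|<\infty\}$, whose norm is genuine because $(A_1)$ gives $a(x)+1\ge a_0+1>0$. I would first record two elementary facts: the pointwise bound $\|u\|_\infty\le\mu_{\min}^{-1/2}\|u\|_2$ (immediate from $\mu(x)u(x)^2\le\|u\|_2^2$), and the compact embedding $H\hookrightarrow\hookrightarrow\ell^p(V)$ for $p\ge2$ furnished by the embedding theorem established under $(A_1)$--$(A_2)$. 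Since there exist $u\in H$ with $\int_V u^2\log u^2\,d\mu=-\infty$, the functional $J$ is not everywhere finite (nor $C^1$) on $H$, so rather than work with $J$ directly I would minimize over the Nehari manifold $\mathcal N=\{u\in H\setminus\{0\}:\int_V(|\nabla u|^2+au^2)\,d\mu=\int_V u^2\log u^2\,d\mu\}$. The decisive structural observation is that on $\mathcal N$ the energy collapses to $J(u)=\tfrac12\|u\|_2^2$, and that for each $u\ne0$ the fiber map $t\mapsto J(tu)$ is finite, has a unique maximizer $t^*(u)=\exp\!\big(A_u/(2\|u\|_2^2)\big)$ with $A_u:=\int_V(|\nabla u|^2+au^2-u^2\log u^2)\,d\mu$, and $t^*(u)u$ is the unique point where the ray meets $\mathcal N$.

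Next I would establish a positive lower bound and compactness for a minimizing sequence. Using the $\ell^\infty$-bound, $\int_V u^2\log u^2\,d\mu\le\|u\|_2^2\log(\|u\|_2^2/\mu_{\min})$; inserting this into the Nehari identity yields $\|u\|_2^2\ge\mu_{\min}e^{a_0}$ for every $u\in\mathcal N$, so $m:=\inf_{\mathcal N}J\ge\tfrac12\mu_{\min}e^{a_0}>0$. For a minimizing sequence $u_n\in\mathcal N$ one has $\|u_n\|_2^2=2J(u_n)\to2m$ bounded, and the same logarithmic estimate forces $\|u_n\|^2=\int_V u_n^2\log u_n^2\,d\mu+\|u_n\|_2^2$ to be bounded, i.e. $\{u_n\}$ is bounded in $H$. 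The compact embedding then gives, along a subsequence, $u_n\rightharpoonup u$ in $H$ and $u_n\to u$ strongly in every $\ell^p$ (hence uniformly, by the $\ell^\infty$-bound); in particular $\|u\|_2^2=\lim\|u_n\|_2^2=2m>0$, so the candidate limit $u$ is nontrivial.

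The hard part is the passage to the limit in the logarithmic term and the identification $u\in\mathcal N$, precisely because $J$ is neither finite nor smooth on $H$. I would split $u^2\log u^2=j_+(u)-j_-(u)$, where $j_+$ is supported on $\{|u|\ge1\}$ and controlled by $|u|^{2+\varepsilon}$, while $0\le j_-\le1/e$. Strong $\ell^{2+\varepsilon}$/uniform convergence gives $\int_V j_+(u_n)\,d\mu\to\int_V j_+(u)\,d\mu$, and Fatou applied to $j_-\ge0$ yields the upper semicontinuity $\limsup_n\int_V u_n^2\log u_n^2\,d\mu\le\int_V u^2\log u^2\,d\mu$; combined with the weak lower semicontinuity of $\|\cdot\|^2$ this gives $A_u\le\liminf_n A_{u_n}=0$, hence $t^*(u)\le1$. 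On the other hand $t^*(u)u\in\mathcal N$ gives $J(t^*(u)u)=t^*(u)^2m\ge m$, so $t^*(u)\ge1$. Squeezing, $t^*(u)=1$, which means $A_u=0$, i.e. $u\in\mathcal N$ and $J(u)=\tfrac12\|u\|_2^2=m$: the infimum is attained.

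Finally I would verify that the minimizer solves the equation and is a genuine ground state. For a finitely supported test function $\varphi$ the map $s\mapsto J(u+s\varphi)$ is well defined and differentiable (the real function $v\mapsto v^2\log v^2$ is $C^1$, with derivative vanishing at $v=0$, so no singularity arises where $u$ vanishes); projecting $u+s\varphi$ back onto $\mathcal N$ via the implicit function theorem (the transversality $\tfrac{d^2}{dt^2}J(tu)|_{t=1}=-2\|u\|_2^2<0$ makes this legitimate) and using minimality forces $J'(u)[\varphi]=0$. Reading this with $\varphi=\mathbf{1}_{x_0}$, the indicator of a single vertex, recovers exactly $-\Delta u(x_0)+a(x_0)u(x_0)=u(x_0)\log u(x_0)^2$ at every $x_0\in V$, so $u$ solves (\ref{eq-eq1}) pointwise. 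Since all terms of $J$ are even, $J(|u|)=J(u)$ lets me take $u\ge0$, and a strong maximum principle on the connected graph (if $u(x_0)=0$ then $\sum_{y\sim x_0}\omega_{x_0y}u(y)=0$ forces $u\equiv0$ by connectedness) upgrades this to $u>0$ on $V$. Being a minimizer of $J$ over $\mathcal N$, $u$ is then a positive ground state solution.
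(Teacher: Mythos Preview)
Your proof is correct and follows the same Nehari-manifold strategy as the paper, but the execution differs in several sub-steps. For the lower bound on $m$ and the boundedness of the minimizing sequence, you exploit the discrete pointwise estimate $\|u\|_\infty^2\le\mu_{\min}^{-1}\|u\|_2^2$ to obtain directly $\int_V u^2\log u^2\,d\mu\le\|u\|_2^2\log(\|u\|_2^2/\mu_{\min})$, which yields both an explicit bound $m\ge\tfrac12\mu_{\min}e^{a_0}$ and the $H$-boundedness in one stroke; the paper instead interpolates via H\"older--Young to get $\int u^2\log u^2\le\tfrac12\|u\|_{\mathcal H}^2+C\|u\|_2^{2/(1-\varepsilon)}$ and proves $d>0$ by contradiction. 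For the criticality of the minimizer you project $u+s\varphi$ onto $\mathcal N$ via the implicit function theorem (using the transversality $\tfrac{d^2}{dt^2}J(tu)\big|_{t=1}=-2\|u\|_2^2<0$) and differentiate at $s=0$, whereas the paper runs a deformation-by-contradiction: if some $\phi\in C_c(V)$ had $J'(u)\cdot\phi<0$, a cut-off path $su+\varepsilon\eta(s)\phi$ would meet $\mathcal N$ at a strictly lower level. Your route is more elementary and gives sharper constants, at the cost of leaning on the discrete $\ell^\infty$--$\ell^2$ inequality (which has no Euclidean analogue); the paper's arguments transplant more readily to the continuous setting. One small slip: on a graph only $|\nabla|u||\le|\nabla u|$ holds, so $J(|u|)\le J(u)$ rather than equality; this does not affect the theorem (positivity is not claimed), and in any case $A_{|u|}\le A_u=0$ together with your squeezing argument still places $|u|$ on $\mathcal N$ with $J(|u|)=m$.
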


We note that the assumption $(A_2)$ is weaker than the coercivity condition $(B_1)$ in \cite{Grigor'yan2017} even when $D_M$ is empty. This assumption was originally introduced by Bartsch and Wang in \cite{Bartsch1995} to study the existence and multiplicity of solutions of nonlinear Schr\"odinger equations in $\mathbb{R}^N$. The potential $a(x)$ in \cite{Bartsch1995} is positive and bounded away from $0$, and later was relaxed to the critical frequency case, namely, $\inf_{x\in V}a(x)=0$, by Byeon and Wang \cite{Byeon2002,Byeon2003} and Sirakov \cite{Sirakov2002}. In \cite{DS2007}, in a similar spirit, Ding and Szulkin dealt with the case when $a(x)$ is sign-changing. Inspired by these studies on nonlinear Schr\"odinger equations in the Euclidean space, we consider problem (\ref{eq-eq1}) when $a(x)$ may change sign. As far as we know, there is no result about nonlinear Schr\"odinger equations on discrete graphs with potentials of sign-changing.

Furthermore, if we generalize $(B_2)$ in \cite{Grigor'yan2017} to the sign-changing potential case, we shall obtain the following result.
\begin{theorem}\label{th1.2}
Let $G=(V,E)$ be a connected locally finite graph. Assume the potential $a:V\to \mathbb{R}$  satisfies $(A_1)$ and
\begin{description}
\item[$(A'_2)$] there exists $M_0>0$ such that $1/a(x)\in L^1(V\setminus D_{M_0})$, where $D_{M_0}=\{x\in V: a(x)\le M_0\}$ and the volume of $D_{M_0}$ is finite.
\end{description}
Then the problem (\ref{eq-eq1}) admits a ground state solution.
\end{theorem}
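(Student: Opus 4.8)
The plan is to solve the problem variationally: under $(A_1)$ and $(A'_2)$ the associated functional is of class $C^1$, so I would apply the mountain pass theorem and then identify the mountain pass level with the least critical value. I would work in the Hilbert space
\[
H=\Big\{\,u:V\to\mathbb{R}\ :\ \int_V\big(|\nabla u|^2+(a(x)+1)u^2\big)\,d\mu<\infty\,\Big\},
\]
with $\int_V|\nabla u|^2\,d\mu=\frac12\sum_{x\in V}\sum_{y\sim x}\omega_{xy}(u(y)-u(x))^2$ the Dirichlet energy and $\|u\|_H^2=\int_V(|\nabla u|^2+(a+1)u^2)\,d\mu$. By $(A_1)$ one has $a+1\ge a_0+1>0$, so $\|\cdot\|_H$ is a genuine norm controlling both $\int_V|\nabla u|^2\,d\mu$ and $\|u\|_{\ell^2}^2$, and the energy reads
\[
J(u)=\frac12\|u\|_H^2-\frac12\int_V u^2\log u^2\,d\mu .
\]
The decisive input is the embedding attached to $(A'_2)$: splitting $V=D_{M_0}\cup(V\setminus D_{M_0})$, using $Vol(D_{M_0})<\infty$ on the first piece and Cauchy--Schwarz with $1/a\in L^1(V\setminus D_{M_0})$ on the second gives $H\hookrightarrow\ell^1(V)$, hence, via the discrete inclusions $\ell^1\hookrightarrow\ell^p$, a compact embedding $H\hookrightarrow\ell^p(V)$ for every $p\ge1$. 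Since the range now reaches below $2$, the elementary bound $|t^2\log t^2|\le C_\delta(|t|^{2-\delta}+|t|^{2+\delta})$ shows $\int_V u^2\log u^2\,d\mu$ is finite and $J\in C^1(H,\mathbb{R})$ with $J'(u)v=\langle u,v\rangle_H-\int_V(u\log u^2+u)v\,d\mu$.

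Next I would check the mountain pass geometry. From $\|u\|_\infty\le\mu_{\min}^{-1/2}\|u\|_{\ell^2}\le C\|u\|_H$, choosing $\rho$ small enough forces $|u|<1$ on $\{\|u\|_H=\rho\}$, so $u^2\log u^2\le0$ pointwise and $J(u)\ge\frac12\rho^2=:\alpha>0$. For the far point, fix $u_0\ne0$; then
\[
J(tu_0)=\frac{t^2}{2}\|u_0\|_H^2-\frac{t^2}{2}\log t^2\int_V u_0^2\,d\mu-\frac{t^2}{2}\int_V u_0^2\log u_0^2\,d\mu\to-\infty
\]
as $t\to\infty$, so $e=t_0u_0$ works for $t_0$ large. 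Let $c$ denote the resulting mountain pass level.

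The main obstacle is the Palais--Smale condition, because the logarithmic nonlinearity fails the Ambrosetti--Rabinowitz condition and boundedness of (PS) sequences is not automatic. For $J(u_n)\to c$ and $J'(u_n)\to0$, the identity $J(u_n)-\frac12J'(u_n)u_n=\frac12\|u_n\|_{\ell^2}^2$ yields $\|u_n\|_{\ell^2}^2\le C(1+\|u_n\|_H)$, while $J'(u_n)u_n=o(\|u_n\|_H)$ gives $\|u_n\|_H^2=\int_V u_n^2\log u_n^2\,d\mu+\|u_n\|_{\ell^2}^2+o(\|u_n\|_H)$. Estimating $\int_V u_n^2\log u_n^2\,d\mu\le C\|u_n\|_{\ell^{2+\delta}}^{2+\delta}\le C'\|u_n\|_{\ell^2}^{2+\delta}$ via the discrete embedding $\ell^2\hookrightarrow\ell^{2+\delta}$ and inserting the first bound leads to $\|u_n\|_H^2\le C(1+\|u_n\|_H)^{1+\delta/2}+o(\|u_n\|_H)$, which for $\delta<2$ forces $\{u_n\}$ to be bounded. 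Then $u_n\rightharpoonup u$ in $H$ and, by compactness, $u_n\to u$ in every $\ell^p$ and pointwise; passing to the limit in $J'(u_n)\to0$ (controlling the logarithmic term through the $\ell^p$ convergence and the growth bound) gives $\|u_n-u\|_H\to0$, so $J(u)=c>0$ and $J'(u)=0$, i.e. $u$ is a nontrivial weak solution.

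Finally I would upgrade $u$ to a ground state by identifying $c$ with the least critical value. For each $w\ne0$ the explicit computation above shows the fiber map $t\mapsto J(tw)$ has a unique maximum at some $t(w)>0$, where $t(w)w$ meets the Nehari manifold $\mathcal{N}=\{w\ne0:J'(w)w=0\}$; by the standard minimax characterization $c=\inf_{\mathcal N}J$. Since every nontrivial solution lies on $\mathcal{N}$, the mountain pass solution $u$ satisfies $J(u)=c=\inf\{J(w):w\ne0,\ J'(w)=0\}$ and is therefore a ground state solution of (\ref{eq-eq1}).
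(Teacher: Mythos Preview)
Your proposal is correct and follows the same overall strategy as the paper---compact embedding under $(A_1)$--$(A'_2)$, $C^1$ smoothness of $J$, mountain pass geometry, Palais--Smale compactness, and then extraction of a ground state---but it diverges at the boundedness step and in the ground-state identification.

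For boundedness of Palais--Smale sequences the paper works with \emph{Cerami} sequences (via the Schechter variant of the mountain pass theorem) and argues by contradiction: it normalizes $w_k=u_k/\|u_k\|_{\mathcal H}$ and separates the cases $w=0$ and $w\ne0$, using in the first case the fibering trick $J(t_ku_k)\to+\infty$ and in the second Fatou-type blow-up of $\int w_k^2\log u_k^2$. Your argument is more direct and more elementary: from $J(u_n)-\tfrac12 J'(u_n)u_n=\tfrac12\|u_n\|_2^2$ you bound $\|u_n\|_2^2$ by $C(1+\|u_n\|_H)$, then feed the discrete inequality $\|u\|_{2+\delta}^{2+\delta}\le C\|u\|_2^{2+\delta}$ (valid because $\mu\ge\mu_{\min}>0$ forces $\|u\|_\infty\le\mu_{\min}^{-1/2}\|u\|_2$) into $\int u_n^2\log u_n^2\le C\|u_n\|_{2+\delta}^{2+\delta}$, obtaining $\|u_n\|_H^2\le C(1+\|u_n\|_H)^{1+\delta/2}+o(\|u_n\|_H)$, which closes for any $\delta<2$. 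This works precisely because of the lattice embedding $\ell^2\hookrightarrow\ell^{2+\delta}$, unavailable in $\mathbb{R}^N$; the paper's Cerami argument is heavier but transfers verbatim to the Euclidean problem.

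For the ground state the paper does not appeal to $c=\inf_{\mathcal N}J$; instead it sets $m=\inf\{J(u):u\ne0,\ J'(u)=0\}$, takes a minimizing sequence of critical points, and invokes its Cerami compactness to pass to the limit. Your route via the Nehari characterization is also standard, but note that the identity $c=\inf_{\mathcal N}J$ needs the mountain pass class $\Gamma$ to allow a variable endpoint (e.g.\ $J(\gamma(1))<0$); with a single fixed $e$ one only gets $c\ge\inf_{\mathcal N}J$ directly. Either enlarge $\Gamma$, or---since you have full (PS) compactness---simply take a minimizing sequence of critical points and pass to the limit, using that every $w\in\mathcal N$ satisfies $\|w\|_H\ge\eta>0$ (from $\|w\|_H^2\le C\|w\|_H^q$) to rule out the trivial limit.
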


To prove Theorem \ref{th1.1} and Theorem \ref{th1.2}, the difficulties are mainly twofold.

Firstly, in most previous works about nonlinear Schr\"odinger equations on locally finite graphs, the potentials have positive lower bound. Introducing the corresponding workspace
\[ \mathcal{E}=\left\{u\in \mathcal{D}^{1,2}(V): \int_V a(x)u^2d\mu<+\infty\right\},\]
endowed with $\|u\|_{\mathcal{E}}^2=\int_V\left(\vert\nabla u\vert^2+a(x)u^2\right)d\mu$, we know that $\mathcal{E}$ is continuously embedded into $H^1(V)$. However, due to the assumption that $a(x)$ may change sign, it is not clear if the quantity $\|\cdot\|_{\mathcal{E}}$ is a norm or not and hence the embedding is not available.

Secondly, it is known that the logarithmic Sobolev inequality plays an important role in the study of logarithmic Schr\"odinger equations (see \cite{AJ2020,Shuai2019,Shuai2021,Squassina2015,Wang2019} etc.). A logarithmic Sobolev inequality on discrete graphs is proved under a positive curvature condition, which requires the measure $\mu$ to be finite (see \cite{Lin2017-3} for the details). Unfortunately, this inequality can not be applied here because, in our case, the measure $\mu$ has a uniform positive lower bound.

 Moreover, note that in previous results on connected locally finite graphs, the (AR) condition plays a crucial role in obtaining bounded Palais-Smale sequences for the associated energy functional, whereas because of the logarithmic term, the (AR) condition is not satisfied and it makes our proof more difficult.

Indeed, to deal with the logarithmic term, two Sobolev compact embedding results are established under $(A_1)-(A_2)$ and $(A_1)-(A'_2)$ respectively. In each case, we encounter different obstacles in seeking the solutions of (\ref{eq-eq1}). Specifically, when $a(x)$ satisfies $(A_1)$ and $(A_2)$, the associated energy functional is not well-defined and the previous arguments on locally finite graphs do not work. We borrow the ideas in \cite{Shuai2019,Wang2019} to overcome the difficulty. By restricting $u^2\log u^2\in L^1(V)$, we develop an approach based on the direction derivative and the Nehari manifold method for locally finite graphs to obtain the existence of ground state solutions of (\ref{eq-eq1}). For the other case, i.e., when $a(x)$ satisfies $(A_1)$ and $(A'_2)$, the associated energy functional is $C^1$, and we can apply a variant of mountain pass theorem to obtain the existence of ground state solutions.

The paper is organized as follows. In Sect. \ref{Preliminary}, we present some notations, definitions and lemmas that will be used throughout the paper, and then two Sobolev compact embedding results will be given. In Sect. \ref{Proof of Theorem 1.1}, we develop the Nehari manifold method for locally finite graphs to prove Theorem \ref{th1.1}. In Sect. \ref{Proof of Theorem 1.2}, we apply the mountain pass theorem with the Cerami condition to give the proof of Theorem \ref{th1.2}.  For simplicity, we denote by $C, C_1,  C_2, \cdots$ the positive constants which may be different in different places.

\section{Some preliminary results}\setcounter{equation}{0}\label{Preliminary}

Let us start with some notations. For any function $u: V\to\mathbb{R}$, the integral of $u$ over $V$ is defined by
\begin{equation}\label{L^1}
\int_{V}ud\mu=\sum_{x\in V}\mu(x)u(x).
\end{equation}
The gradient form of two functions $u,v$ on $V$ is defined as
\begin{equation}\label{gradient}
  \Gamma(u,v)(x)=\frac{1}{2\mu(x)}\sum_{y\sim x}\omega_{xy}\left(u(y)-u(x)\right)\left(v(y)-v(x)\right).
\end{equation}
Write $\Gamma(u)=\Gamma(u,u)$, and sometimes we use $\nabla u \nabla v$ to replace $\Gamma(u,v)$. The length of the gradient for $u$ is defined by
\begin{equation}\label{gradient length}
  \vert \nabla u\vert (x)=\sqrt{\Gamma(u)(x)}=\left(\frac{1}{2\mu(x)}\sum_{y\sim x}\omega_{xy}\left(u(y)-u(x)\right)^2\right)^{1/2}.
\end{equation}
Denote by $C_c(V)$ the set of all functions with compact support, and let $H^1(V)$ be the completion of $C_c(V)$ under the norm
\[
\|u\|_{H^1(V)}=\left( \int_V\left(\vert\nabla u\vert^2+u^2\right)d\mu \right)^{1/2}.
\]
Then, $H^1(V)$ is a Hilbert space with the inner product
\[
\langle u,v\rangle=\int_V\left(\Gamma(u,v)+uv\right)d\mu,\ \forall u,\ v\in H^1(V).
\]
We write $\|u\|_p=\left(\int_V\vert u\vert^p d\mu \right)^{1/p}$ for $p\in[1,+\infty)$ and $\|u\|_{L^\infty}=\sup\limits_{x\in V}\vert u(x)\vert$.

We introduce the space
\[
\mathcal{H}=\left\{u\in H^1(V): \int_Va(x)u^2d\mu<+\infty\right\}
\]
with the norm
\[
\|u\|_{\mathcal{H}}=\left(\int_V\left(\vert \nabla u\vert^2+\left(a(x)+1\right)u^2 \right)d\mu\right)^{1/2}
\]
induced by the inner product
\[
\langle u,v\rangle_{\mathcal{H}}=\int_V\left(\Gamma(u,v)+\left(a(x)+1\right)uv\right)d\mu,\ \forall u,\ v\in \mathcal{H}.
\]
Clearly, $\mathcal{H}$ is also a Hilbert space.

Next, we will establish two Sobolev embedding results when $a(x)$ satisfies $(A_1)-(A_2)$ and $(A_1)- (A'_2)$, respectively.

\begin{lemma}\label{lemA2}
Assume $\mu(x)\geq\mu_{\min}>0$ and $a(x)$ satisfies $(A_1)$, $(A_2)$. Then $\mathcal{H}$ is weakly pre-compact and $\mathcal{H}$ is compactly embedded into $L^p(V)$ for all $2\leq p\leq+\infty$.
\end{lemma}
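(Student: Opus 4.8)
The plan is to first record the elementary continuous embeddings that the graph structure forces, and then to isolate the genuinely compact part of the argument, which lives entirely in the $L^2$-estimate and rests on assumption $(A_2)$.

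First I would observe that $(A_1)$ guarantees $a(x)+1\ge a_0+1>0$, so that $\|\cdot\|_{\mathcal{H}}$ is a bona fide norm, and, splitting the integral $\int_V(a(x)+1)u^2\,d\mu$ according to the sign of $a(x)$, that $\|u\|_{\mathcal{H}}^2\ge (a_0+1)\|u\|_{H^1(V)}^2$; hence $\mathcal{H}\hookrightarrow H^1(V)$ continuously. The second elementary ingredient is the pointwise bound coming from $\mu(x)\ge\mu_{\min}$: for every $x\in V$,
\[
\mu_{\min}\,|u(x)|^2\le \sum_{y\in V}\mu(y)|u(y)|^2=\|u\|_2^2 ,
\]
so that $\|u\|_{L^\infty}\le \mu_{\min}^{-1/2}\|u\|_2\le C\|u\|_{\mathcal{H}}$. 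Together with $\|u\|_p^p\le\|u\|_{L^\infty}^{p-2}\|u\|_2^2$ this yields the continuous embeddings $\mathcal{H}\hookrightarrow L^p(V)$ for all $2\le p\le+\infty$. Since $\mathcal{H}$ is a Hilbert space it is reflexive, so every bounded sequence in $\mathcal{H}$ has a weakly convergent subsequence; this is the asserted weak pre-compactness.

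For the compactness I would take $u_n\rightharpoonup u$ weakly in $\mathcal{H}$ and aim to prove $u_n\to u$ in $L^2(V)$. The displayed pointwise bound shows that for each fixed $x$ the evaluation $u\mapsto u(x)$ is a bounded linear functional on $\mathcal{H}$, so weak convergence forces $u_n(x)\to u(x)$ for every $x\in V$. Given $\varepsilon>0$, the key step is the tail estimate on $V\setminus D_M=\{a>M\}$: there $a(x)+1>M+1$, whence
\[
\int_{V\setminus D_M}(u_n-u)^2\,d\mu\le \frac{1}{M+1}\int_{V\setminus D_M}\big(a(x)+1\big)(u_n-u)^2\,d\mu\le \frac{C}{M+1},
\]
uniformly in $n$, because $\|u_n-u\|_{\mathcal{H}}$ stays bounded. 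Choosing $M$ large makes this tail smaller than $\varepsilon$.

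On the remaining region $D_M$, assumption $(A_2)$ gives $Vol(D_M)<\infty$, so $(D_M,\mu)$ is a finite measure space; moreover $|u_n(x)-u(x)|\le\mu_{\min}^{-1/2}\|u_n-u\|_2$ is uniformly bounded in $n$ and $x$. Since $(u_n-u)^2\to0$ pointwise and is uniformly bounded on a finite measure space, the bounded convergence theorem gives $\int_{D_M}(u_n-u)^2\,d\mu\to0$. Combining the two regions yields $\limsup_n\|u_n-u\|_2^2\le\varepsilon$, and letting $\varepsilon\to0$ proves $u_n\to u$ in $L^2(V)$. Finally I would upgrade the convergence using the graph structure: $\|u_n-u\|_{L^\infty}\le\mu_{\min}^{-1/2}\|u_n-u\|_2\to0$, and then $\|u_n-u\|_p^p\le\|u_n-u\|_{L^\infty}^{p-2}\|u_n-u\|_2^2\to0$ for $2<p<\infty$, which delivers compactness into $L^p(V)$ for every $2\le p\le+\infty$. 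I expect the main obstacle to be this tail step together with the correct use of $(A_2)$: the set $D_M$ need not be finite, only of finite volume, so one must run a bounded convergence argument on $(D_M,\mu)$ rather than a naive finite-sum limit, and the decay of the tail hinges on pairing the weight $a(x)+1$ with $u^2$ exactly as it appears in the $\mathcal{H}$-norm.
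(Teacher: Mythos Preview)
Your proof is correct and follows the same overall strategy as the paper: continuous embeddings from the pointwise bound, weak pre-compactness by reflexivity, pointwise convergence of weakly convergent sequences, a tail estimate on $\{a>M\}$ using the $\mathcal{H}$-norm, and an upgrade from $L^2$ to all $L^p$ via the $L^\infty$ bound.

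The one genuine organisational difference is in the compactness step on the ``bounded potential'' region. You split $V$ directly into $D_M$ and $V\setminus D_M$ and, on $D_M$, invoke the bounded convergence theorem on the finite measure space $(D_M,\mu)$, having noted that $(u_n-u)^2$ is uniformly bounded and converges pointwise. The paper instead interposes an exhaustion by distance balls: since the graph is locally finite and connected, $\{d(x,x_0)\le R\}$ is a \emph{finite} set, so convergence there is trivial; on the tail $V_1=\{d(x,x_0)>R\}$ the paper splits into $V_1\setminus D_M$ (handled as you do) and $V_1\cap D_M$, the latter having small volume for $R$ large, which is controlled by H\"older against $\|u_k\|_{2p}$. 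Your route is slightly more economical---it avoids both the distance exhaustion and the H\"older step---and it correctly addresses the point you flag, namely that $D_M$ need only have finite volume, not finite cardinality. The paper's route, on the other hand, produces the uniform tail estimate $\int_{d(x,x_0)>R}u_k^2\,d\mu<\varepsilon$ as an explicit intermediate statement, which can be convenient elsewhere.
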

\begin{proof}
At any vertex $x_0\in V$, by $(A_1)$ we have
\[\begin{aligned}
\|u\|_{\mathcal{H}}^2=&\int_V\left(\vert\nabla u\vert^2+\left(1+a(x)\right)u^2\right)d\mu\\
\geq&(1+a_0)\int_Vu^2d\mu\\
=&(1+a_0)\sum\limits_{x\in V}\mu(x)u^2(x)\\
\geq&(1+a_0)\mu_{min}u^2(x_0),
\end{aligned}\]
which implies $\vert u(x_0)\vert \leq\sqrt{\frac{1}{(1+a_0)\mu_{min}}}\|u\|_{\mathcal{H}}$. Thus $\mathcal{H}\hookrightarrow L^\infty(V)$ continuously. Hence, by interpolation we deduce $\mathcal{H}\hookrightarrow L^p(V)$ continuously for all $2\leq p\leq\infty$. Since $\mathcal{H}$ is a Hilbert space and then reflexive, if $\{u_k\}$ is bounded in $\mathcal{H}$, then we have that, up to a subsequence, $u_k\rightharpoonup u$ in $\mathcal{H}$ for some $u\in \mathcal{H}$. In particular, $\{u_k\}\subset\mathcal{H}$ is also bounded in $L^2(V)$ and we have the weak convergence in $L^2(V)$, i.e., for any $\varphi\in L^2(V)$,
\begin{equation}\label{point-wisely}
\lim\limits_{k\to\infty}\int_V(u_k-u)\varphi d\mu=\lim_{k\to\infty}\sum_{x\in V}\mu(x)\left(u_k(x)-u(x)\right)\varphi(x)=0.
\end{equation}
Take any $x_0\in V$ and let
\[
\varphi_0(x)=
\begin{cases}
1, ~ x=x_0,\\
0, ~ x\neq x_0.
\end{cases}
\]
Obviously, $\varphi_0(x)\in L^2(V)$. By substituting $\varphi_0$ into (\ref{point-wisely}), we can get that $\lim\limits_{k\to\infty}u_k(x)=u(x)$ for any fixed $x\in V$.

We now prove $u_k\to u$ in $L^p(V)$ for all $2\leq p\leq\infty$. We first show that $u_k\to u$ in $L^2(V)$. It suffices to prove that $\alpha_k:=\|u_k\|_2^2\to\|u\|_2^2$. We assume that, up to a subsequence, $\alpha_k\to \alpha$ such that $\alpha\geq\|u\|_2^2$. Let $x_0\in V$ be fixed. We claim that for every $\epsilon>0$, there exists $R>0$ such that
\begin{equation}\label{greaterR}
\int_{d(x,x_0)>R}u_k^2(x)d\mu<\epsilon\ \mbox{uniformly in }k.
\end{equation}
Since $\{x\in V:  d(x,x_0)\leq R\}$ is a finite set and $u_k\to u$ for any $x\in V$ as $k\to\infty$, we have that $\lim\limits_{k\to\infty}\int_{d(x,x_0)\leq R}\vert u_k-u\vert^2d\mu=0$. If (\ref{greaterR}) holds, then
\[\begin{aligned}
\int_Vu^2d\mu=&\int_{d(x,x_0)\leq R}u^2d\mu+\int_{d(x,x_0)>R}u^2d\mu\\
\geq&\lim_{k\to\infty}\int_{d(x,x_0)\leq R}u_k^2d\mu\\
\geq&\lim_{k\to\infty}\int_Vu_k^2d\mu-\lim_{k\to\infty}\int_{d(x,x_0)>R}u_k^2d\mu\\
\geq&\alpha-\epsilon.
\end{aligned}\]
Since $\epsilon$ is arbitrary, we get $\|u\|_2^2\ge \alpha$ and hence $u_k\to u$ in $L^2(V)$.

It remains to prove (\ref{greaterR}). For fixed $\epsilon>0$, we choose constants $M>\frac{2}{\epsilon}\sup\limits_{k}\|u_k\|_{\mathcal{H}}^2$, $p\in(1,\infty)$ and
\[
C>\sup_{u\in\mathcal{H}\setminus\{0\}}\frac{\|u\|_{2p}^2}{\|u\|_{\mathcal{H}}^2}.
\]
Denote $V_1:=\{x\in V:  d(x,x_0)>R\}$. By $(A_2)$ it follows that, for $R>0$ large enough,
\[
Vol(V_1\cap D_M):=\sum\limits_{x\in V_1\cap D_M }\mu(x)\leq\left(\frac{\epsilon}{2C\sup\limits_{k}\|u_k\|_{\mathcal{H}}^2}\right)^q,
\]
where $q$ satisfies $\frac{1}{p}+\frac{1}{q}=1$.
Note that
\[
\int_{V_1}u_k^2d\mu=\int_{V_1\setminus D_M}u_k^2d\mu+\int_{V_1\cap D_M}u_k^2d\mu.
\]
For $\int_{V_1\setminus D_M}u_k^2d\mu$, we have
\[
\int_{V_1\setminus D_M}u_k^2d\mu\leq\int_{V_1\setminus D_M}\frac{a(x)}{M}u_k^2d\mu\leq\frac{\|u_k\|_{\mathcal{H}}^2}{M}\leq\frac{\epsilon}{2}.
\]
For $\int_{V_1\cap D_M}u_k^2d\mu$, by H\"{o}lder's inequality
\[\begin{aligned}
\int_{V_1\cap D_M}u_k^2d\mu\leq&\left(\int_{V_1\cap D_M}\vert u_k\vert^{2p}d\mu\right)^{\frac{1}{p}}\left(\int_{V_1\cap D_M}1d\mu\right)^{\frac{1}{q}}\\
\leq&\|u_k\|_{2p}^2\cdot\left(\sum_{x\in V_1\cap D_M}\mu(x)\right)^\frac{1}{q}\\
\leq&C\|u_k\|_{\mathcal{H}}^2\cdot\frac{\epsilon}{2C\sup\limits_{k}\|u_k\|_{\mathcal{H}}^2}\\
\leq&\frac{\epsilon}{2}.
\end{aligned}\]
Hence (\ref{greaterR}) holds.

Since
\[
\|u_k-u\|_{\infty}^2\leq\frac{1}{\mu_{\min}}\int_V\vert u_k-u\vert^2d\mu,
\]
we have for any $2<p<\infty$,
\[
\int_V\vert u_k-u\vert^pd\mu\leq\left(\frac{1}{\mu_{\min}}\right)^{\frac{p-2}{2}}\left(\int_V\vert u_k-u\vert^2d\mu\right)^{\frac{p}{2}}.
\]
This completes the proof.
\end{proof}

\begin{lemma}\label{lemA'2}
Assume $\mu(x)\geq\mu_{\min}>0$ and $a(x)$ satisfies $(A_1)$, $(A'_2)$. Then $\mathcal{H}$ is weakly pre-compact and $\mathcal{H}$ is compactly embedded into $L^p(V)$ for all $1\leq p\leq+\infty$.
\end{lemma}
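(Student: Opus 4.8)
The plan is to mirror the structure of the proof of Lemma \ref{lemA2}, since the conclusion is nearly identical. The only difference is the range of $p$: under $(A_1)$--$(A'_2)$ we must additionally cover $1\le p<2$, whereas the earlier lemma only handled $2\le p\le+\infty$. The essential content therefore splits into two parts. First I would establish the compact embedding $\mathcal{H}\hookrightarrow L^2(V)$, and then I would separately obtain the range $1\le p<2$ (which requires a genuinely new argument, since the interpolation trick at the end of Lemma \ref{lemA2} only pushes upward from $L^2$ to $L^\infty$, not downward toward $L^1$).

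For the $L^2$ part, the opening steps carry over verbatim. From $(A_1)$ one gets $\|u\|_{\mathcal{H}}^2\ge(1+a_0)\mu_{\min}u^2(x_0)$ at each vertex, hence the continuous embedding $\mathcal{H}\hookrightarrow L^\infty(V)$ and, by interpolation, $\mathcal{H}\hookrightarrow L^p(V)$ for all $2\le p\le\infty$. Reflexivity of the Hilbert space $\mathcal{H}$ gives a weakly convergent subsequence $u_k\rightharpoonup u$, and testing against the indicator $\varphi_0$ of a single vertex yields pointwise convergence $u_k(x)\to u(x)$ for every $x\in V$. To upgrade this to $L^2$-convergence, I would again reduce to the uniform smallness of tails, i.e. prove that for every $\epsilon>0$ there is $R>0$ with $\int_{d(x,x_0)>R}u_k^2\,d\mu<\epsilon$ uniformly in $k$. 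Writing $V_1=\{d(x,x_0)>R\}$ and splitting over $D_{M_0}$ and its complement, the piece on $V_1\setminus D_{M_0}$ is controlled exactly as before by $\|u_k\|_{\mathcal{H}}^2/M_0$. The new ingredient is the bound on $V_1\cap D_{M_0}$: here instead of a volume estimate I would invoke $(A'_2)$, namely $1/a(x)\in L^1(V\setminus D_{M_0})$ together with the finiteness of $\mathrm{Vol}(D_{M_0})$, to make this contribution small once $R$ is large, using that the finitely-many vertices inside $D_{M_0}$ fall outside the ball $\{d(x,x_0)\le R\}$ for $R$ big enough. This establishes $u_k\to u$ in $L^2(V)$, and then the inequality $\|u_k-u\|_\infty^2\le\mu_{\min}^{-1}\|u_k-u\|_2^2$ propagates convergence to all $2<p<\infty$ and to $p=\infty$.

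For the range $1\le p<2$, the plan is to exploit integrability of $1/a$ directly. On $V\setminus D_{M_0}$ I would write $\lvert u\rvert=\lvert u\rvert\cdot a(x)^{1/2}\cdot a(x)^{-1/2}$ and apply H\"older's inequality with the pair $(2,2)$, bounding $\int_{V\setminus D_{M_0}}\lvert u_k-u\rvert\,d\mu$ by $\bigl(\int a(x)(u_k-u)^2\,d\mu\bigr)^{1/2}\bigl(\int_{V\setminus D_{M_0}}a(x)^{-1}\,d\mu\bigr)^{1/2}$, where the first factor is controlled by $\|u_k-u\|_{\mathcal{H}}$ (bounded) and, more usefully, the tail of the second factor is small by $(A'_2)$. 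Combined with the already-established $L^2$ (hence local) convergence on the finite set $D_{M_0}$, this yields $u_k\to u$ in $L^1(V)$; interpolation between $L^1$ and $L^2$ then covers every $p\in(1,2)$.

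I expect the main obstacle to be the $L^1$ estimate, specifically handling the factor $\int a(x)(u_k-u)^2\,d\mu$: since $a$ may change sign, I must be careful that this is controlled by the norm (which uses $a(x)+1$, not $a(x)$), so I would either split off the bounded region where $a$ is negative or small and treat it by the finite-volume local convergence, reserving the $1/a$-weighted H\"older argument for the region $V\setminus D_{M_0}$ where $a(x)>M_0>0$ is genuinely positive and bounded below. Getting this bookkeeping right—so that every term is either a finite sum over $D_{M_0}$ on which pointwise convergence suffices, or a tail on which $(A'_2)$ supplies smallness—is the delicate point of the argument.
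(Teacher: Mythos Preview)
Your $L^1$ argument is correct and is essentially the paper's proof. The gap is in your $L^2$ step: with $M_0$ fixed by $(A'_2)$, the bound $\int_{V_1\setminus D_{M_0}}u_k^2\,d\mu\le\|u_k\|_{\mathcal{H}}^2/M_0$ is a \emph{fixed} constant, not something you can push below $\epsilon/2$. In Lemma~\ref{lemA2} this worked because $M$ was a free parameter chosen large depending on $\epsilon$; here there is no such freedom, and you have in effect applied the $1/a$ integrability to the wrong piece (it says nothing about $V_1\cap D_{M_0}$, where finite volume alone already makes the set empty once $R$ is large).

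The paper sidesteps this entirely by reversing your order: it proves $L^1$ compactness \emph{first}, via exactly your H\"older estimate
\[
\int_{V_1\setminus D_{M_0}}|u_k-u|\,d\mu
\;\le\;\Bigl(\int_{V_1\setminus D_{M_0}}\frac{1}{a}\,d\mu\Bigr)^{1/2}
\Bigl(\int_{V\setminus D_{M_0}}a\,|u_k-u|^2\,d\mu\Bigr)^{1/2},
\]
where the first factor is the tail of an $L^1$ function and hence small for $R$ large, while the second is uniformly bounded because $a>M_0>0$ on $V\setminus D_{M_0}$. It then uses $\|v\|_\infty\le\mu_{\min}^{-1}\|v\|_1$ to obtain $L^\infty$ (and hence all $L^p$, $1\le p\le\infty$) directly from $L^1$, so no separate $L^2$ compactness argument is ever needed. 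Your proposal already contains every ingredient for this; you just need to drop the detour through $L^2$ (or, alternatively, observe that $(A'_2)$ implies $(A_2)$ and invoke Lemma~\ref{lemA2} wholesale for the range $p\ge2$).
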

\begin{proof}
As in the proof of Lemma \ref{lemA2}, we have $\mathcal{H}\hookrightarrow L^p(V)$ continuously for all $2\leq p\leq\infty$. Next, we prove that $\mathcal{H}\hookrightarrow L^p(V)$ continuously for any $1\leq p<2$. Indeed, since $D_{M_0}$ is a bounded set, it follows that $\mathcal{H}\hookrightarrow L^p(D_{M_0})$ continuously for any $1\leq p<2$. By $(A'_2)$, for any $u\in\mathcal{H}$,
\[\begin{aligned}
\int_{V\setminus D_{M_0}}\vert u\vert d\mu=&\int_{V\setminus D_{M_0}}\left(\frac{1}{a(x)}\right)^{\frac{1}{2}}\left(a(x)\right)^{\frac{1}{2}}\vert u\vert d\mu\\
\leq&\left(\int_{V\setminus D_{M_0}}\frac{1}{a(x)}d\mu\right)^{\frac{1}{2}}\left(\int_{V\setminus D_{M_0}}a(x)u^2d\mu\right)^{\frac{1}{2}}\\
\leq&C_1\left(\int_{V\setminus D_{M_0}}\frac{1}{a(x)}d\mu\right)^{\frac{1}{2}}\|u\|_{\mathcal{H}},
\end{aligned}\]
which implies that $\mathcal{H}\hookrightarrow L^p(V\setminus D_{M_0})$ continuously for any $1\leq p<2$.

Let $\{u_k\}$ be a bounded sequence in $\mathcal{H}$. By similar arguments as in Lemma \ref{lemA2}, we have $u_k\to u$ point wisely $x\in V$ as $k\to\infty$. In what follows, we prove $u_k\to u$ in $L^p(V)$ for all $1\leq p\leq\infty$. Since $\{u_k\}$ is bounded in $\mathcal{H}$ and $u\in\mathcal{H}$, there exists some constant $C_2$ such that
\[
\int_{V\setminus D_{M_0}}a(x)\vert u_k-u\vert^2d\mu\leq C_2.
\]
Fix $x_0\in V$. For any $\varepsilon>0$, using $(A'_2)$, there exists $R>0$ such that
\[
\int_{V_1\setminus D_{M_0}}\frac{1}{a(x)}d\mu<\varepsilon^2.
\]
Then, together with the H\"{o}lder's inequality, we deduce
\begin{equation}\label{eqV1bdd}
\begin{aligned}
\int_{V_1\setminus D_{M_0}}\vert u_k-u\vert d\mu=&\int_{V_1\setminus D_{M_0}}\left(\frac{1}{a(x)}\right)^{\frac{1}{2}}\cdot\left(a(x)\right)^{\frac{1}{2}}\vert u_k-u\vert d\mu\\
\leq&\left(\int_{V_1\setminus D_{M_0}}\frac{1}{a(x)d\mu}\right)^{\frac{1}{2}}\left(a(x)\vert u_k-u\vert^2d\mu\right)^{\frac{1}{2}}\\
\leq&\sqrt{C_2}\varepsilon.
\end{aligned}
\end{equation}
Since
\begin{equation*}\label{eqdistbdd}
\lim\limits_{k\to\infty}\left(\int_{V_1\cap D_{M_0}}\vert u_k-u\vert d\mu+\int_{d(x,x_0)\leq R}\vert u_k-u\vert d\mu\right)=0,
\end{equation*}
by (\ref{eqV1bdd}), we get
\[
\liminf\limits_{k\to\infty}\int_V\vert u_k-u\vert d\mu=0,
\]
which implies that $u_k\to u$ in $L^1(V)$. In view of
\[
\|u_k-u\|_{L^\infty(V)}\leq\frac{1}{\mu_{\min}}\int_V\vert u_k-u\vert d\mu,
\]
it follows that
\[
\int_V\vert u_k-u\vert^pd\mu\leq\frac{1}{\mu_{\min}^{p-1}}\left(\int_V\vert u_k-u\vert d\mu\right)^p, \,\, \forall p\in (1, +\infty).
\]
Hence, $u_k\to u$ in $L^p(V)$ for all $1\leq p\leq+\infty$.
\end{proof}

\section{Proof of Theorem \ref{th1.1}}\label{Proof of Theorem 1.1}

In this section, under the assumptions $(A_1)$ and $(A_2)$, we prove that equation (\ref{eq-eq1}) admits a ground state solution by using the Nehari manifold method.

We note that equation (\ref{eq-eq1}) is formally associated with the energy functional $J:\ H^1(V)\to\mathbb{R}\cup\{+\infty\}$ given by
\[
J(u)=\frac{1}{2}\int_V\left(\vert \nabla u\vert^2+\left(a(x)+1\right)u^2 \right)d\mu-\frac{1}{2}\int_Vu^2\log u^2d\mu.
\]
But $J$ may fails to be $C^1$ in $H^1(V)$. In fact, for some $G=(V,E)$ with suitable measure $\mu$, there exists $u\in H^1(V)$ but $\int_Vu^2\log u^2d\mu=-\infty$ (see the 'Appendix' for the details).

 When $a(x)$ satisfies $(A_1)$ and $(A_2)$, we consider the functional $J$ on the set
\[
\mathcal{D}=\left\{u\in \mathcal{H}: \int_Vu^2\vert\log u^2\vert d\mu<\infty\right\},
\]
that is,
 \[
J(u)=\frac{1}{2}\|u\|_{\mathcal{H}}^2-\frac{1}{2}\int_Vu^2\log u^2d\mu, \forall u\in \mathcal{D}.
\]

\begin{definition}\label{def1}
\begin{description}
\item[(1)] Define
\[
J'(u)\cdot v:=\int_V\left(\Gamma(u,v)+a(x)uv\right)d\mu-\int_Vuv\log u^2d\mu, \quad \forall u, v\in\mathcal{D}.
\]
Clearly, $\int_Vuv\log u^2d\mu$ is well-defined for $u,v\in\mathcal{D}$.
\item[(2)] We say that $u\in\mathcal{H}$ is a critical point of $J$ if $u\in\mathcal{D}$ and $J'(u)\cdot v=0$ for all $v\in\mathcal{D}$, and we say that $c\in\mathbb{R}$ is a critical value for $J$ if there exists a critical point $u\in\mathcal{H}$ such that $J(u)=c$.
\end{description}
\end{definition}

Clearly, $u$ is a weak solution to equation (\ref{eq-eq1}) if and only if $u$ is a critical point of $J$. Furthermore,
\begin{proposition}\label{pro-pointsolution}
If $u\in\mathcal{D}$ is a weak solution of (\ref{eq-eq1}), then $u$ is a point-wise solution of equation (\ref{eq-eq1}).
\end{proposition}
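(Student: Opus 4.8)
The plan is to localize the weak formulation at each vertex by testing against the indicator functions supported at single points, using the discrete Green (integration-by-parts) formula to convert the gradient term.

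First I would fix an arbitrary vertex $x_0\in V$ and introduce the test function
\[
\varphi_{x_0}(x)=
\begin{cases}
1, & x=x_0,\\
0, & x\neq x_0.
\end{cases}
\]
Since $\varphi_{x_0}$ is supported at a single point, it lies in $C_c(V)\subset H^1(V)$; moreover $\int_V a(x)\varphi_{x_0}^2\,d\mu=a(x_0)\mu(x_0)<\infty$ and $\int_V \varphi_{x_0}^2\vert\log \varphi_{x_0}^2\vert\,d\mu=0$, so in fact $\varphi_{x_0}\in\mathcal{D}$ and is an admissible test function in the weak formulation. This verification is the first thing to settle, since the identity $J'(u)\cdot v=0$ from Definition \ref{def1} may only be applied to elements of $\mathcal{D}$.

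Next I would substitute $v=\varphi_{x_0}$ into $J'(u)\cdot v=0$. The key tool is the discrete Green formula $\int_V \Gamma(u,v)\,d\mu=-\int_V(\Delta u)v\,d\mu$, which follows from the symmetry $\omega_{xy}=\omega_{yx}$ together with a relabeling of the double sum, and whose sums are all finite because $G$ is locally finite. Applying it to the gradient term gives $\int_V\Gamma(u,\varphi_{x_0})\,d\mu=-\mu(x_0)\Delta u(x_0)$, while the remaining two terms collapse to $\mu(x_0)a(x_0)u(x_0)$ and $\mu(x_0)u(x_0)\log u^2(x_0)$ respectively, since $\varphi_{x_0}$ vanishes off $x_0$. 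Dividing the resulting identity by $\mu(x_0)\geq\mu_{\min}>0$ yields exactly $-\Delta u(x_0)+a(x_0)u(x_0)=u(x_0)\log u^2(x_0)$, and since $x_0$ was arbitrary this shows that $u$ solves (\ref{eq-eq1}) point-wise.

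I do not expect a genuine obstacle here: the argument is essentially a localization via single-vertex test functions, and local finiteness removes any convergence issue. The only two points requiring care are the membership $\varphi_{x_0}\in\mathcal{D}$ noted above and the treatment of the logarithmic term at vertices where $u(x_0)=0$; at such points one uses the convention $s\log s^2\to 0$ as $s\to0$, so that $\int_V u\varphi_{x_0}\log u^2\,d\mu$ is interpreted as $0$ and the point-wise identity holds trivially. Both are immediate from the definition of $\mathcal{D}$.
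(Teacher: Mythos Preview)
Your proposal is correct and follows essentially the same route as the paper: test the weak formulation against the indicator function of a single vertex, apply the discrete integration-by-parts identity to convert $\int_V\Gamma(u,\varphi)\,d\mu$ into $-\int_V(\Delta u)\varphi\,d\mu$, and read off the point-wise equation after dividing by $\mu(x_0)$. Your explicit verification that $\varphi_{x_0}\in\mathcal{D}$ and your remark on the convention at points where $u(x_0)=0$ are in fact more careful than the paper's version, which simply notes $\varphi\in C_c(V)$ and proceeds.
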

\begin{proof}
If $u\in\mathcal{D}$ is a weak solution of (\ref{eq-eq1}), then for any $\varphi\in\mathcal{D}$, there holds
\[
\int_V\left(\Gamma(u,\varphi)+a(x)u\varphi\right)d\mu=\int_Vu\varphi\log u^2d\mu.
\]
Let $C_c(V)$ be the set of all functions on $V$ with compact support. It is easily seen that it is dense in $\mathcal{D}$. By integration by parts we get
\begin{equation}\label{eqpoint-wise}
\int_V\left(-\Delta u+a(x)u\right)\varphi d\mu=\int_Vu\varphi\log u^2d\mu,\ \ \forall \varphi\in C_c(V).
\end{equation}
For any fixed $x_0\in V$, define
\[
\varphi(x)=
\begin{cases}
1, ~ &x=x_0,\\
0, ~ &x\neq x_0.
\end{cases}
\]
Clearly, $\varphi\in C_c(V)$.
Taking $\varphi$ as a test function in (\ref{eqpoint-wise}), we deduce
\[
-\Delta u(x_0)+a(x_0)u(x_0)-u(x_0)\log \left(u(x_0)\right)^2=0.
\]
Since $x_0$ is arbitrary, we conclude that $u$ is a point-wise solution of (\ref{eq-eq1}).
\end{proof}

We define
\[
\mathcal{N}:=\{u\in\mathcal{D}\setminus\{0\}:  J'(u)\cdot u=0\}
\]
and
\[
d=\inf_{u\in\mathcal{N}}J(u).
\]
First we have
\begin{lemma}\label{unique}
For any $u\in \mathcal{D}\setminus\{0\}$, there exists a unique $t_u>0$ such that $t_uu\in\mathcal{N}$. Furthermore, $J(t_uu)>J(tu)$ for all $t\ge0$ but $t\neq t_u$. Specially, if $u\in\mathcal{N}$, then $t_u=1$.
\end{lemma}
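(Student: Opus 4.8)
The plan is to reduce the whole statement to a one-variable analysis of the fibering map $h(t):=J(tu)$ for $t\ge 0$, where $u\in\mathcal{D}\setminus\{0\}$ is fixed. First I would check that the entire ray $\{tu:t\ge 0\}$ stays in $\mathcal{D}$, so that $h$ is genuinely real-valued: splitting $\log(tu)^2=\log t^2+\log u^2$ gives
\[
\int_V (tu)^2\bigl|\log(tu)^2\bigr|\,d\mu\le t^2\Bigl(|\log t^2|\,\|u\|_2^2+\int_V u^2|\log u^2|\,d\mu\Bigr)<\infty,
\]
since $u\in\mathcal{D}$. Expanding $J(tu)$ with the same splitting then yields the explicit form
\[
h(t)=\frac{t^2}{2}\|u\|_{\mathcal{H}}^2-\frac{t^2}{2}(\log t^2)\,\|u\|_2^2-\frac{t^2}{2}\int_V u^2\log u^2\,d\mu,
\]
which is continuous on $[0,\infty)$ (note $t^2\log t^2\to 0$ as $t\to 0^+$) and smooth on $(0,\infty)$.

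Next I would differentiate. Abbreviating $A=\|u\|_{\mathcal{H}}^2$, $B=\|u\|_2^2>0$ and $C=\int_V u^2\log u^2\,d\mu$, a direct computation gives $h'(t)=t\,g(t)$ with $g(t)=(A-B-C)-B\log t^2$; equivalently $h'(t)=\tfrac1t\,J'(tu)\cdot(tu)$, so that $tu\in\mathcal{N}$ holds precisely when $h'(t)=0$. Because $B>0$, the function $g$ is strictly decreasing on $(0,\infty)$ with $g(t)\to+\infty$ as $t\to 0^+$ and $g(t)\to-\infty$ as $t\to+\infty$; hence $g$ has exactly one zero $t_u>0$, the solution of $\log t_u^2=(A-B-C)/B$. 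This delivers both the existence and the uniqueness of $t_u$ with $t_uu\in\mathcal{N}$.

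For the maximality claim I would use the sign of $g$: since $h'(t)=t\,g(t)$ is positive on $(0,t_u)$ and negative on $(t_u,\infty)$, the map $h$ is strictly increasing and then strictly decreasing, so it attains its strict global maximum over $[0,\infty)$ at $t_u$; this is exactly $J(t_uu)>J(tu)$ for every $t\ge0$ with $t\neq t_u$ (the value $t=0$ being covered by $h(0)=0<h(t_u)$). Finally, if $u\in\mathcal{N}$ then $J'(u)\cdot u=A-B-C=0$, i.e.\ $g(1)=0$, and the uniqueness of the zero forces $t_u=1$.

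I expect no serious obstacle here, as this is the standard Nehari fibering analysis; the only points needing care are the well-definedness of $h$ on the full ray and the observation that the $t^2\log t^2$ term produced by the logarithmic nonlinearity is what makes $g$ strictly monotone, thereby replacing the usual power-law homogeneity and guaranteeing a single critical fiber. Keeping the splitting $\log(tu)^2=\log t^2+\log u^2$ consistent throughout is the main bookkeeping hazard.
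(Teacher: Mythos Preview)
Your proof is correct and follows essentially the same approach as the paper: both analyze the fibering map $h(t)=J(tu)$, derive the identical explicit formula, and exploit that $h'(t)/t=(A-B-C)-B\log t^2$ is strictly decreasing to obtain existence, uniqueness, and maximality of $t_u$. If anything, your write-up is slightly more careful---you verify that the ray $\{tu\}$ stays in $\mathcal{D}$ and spell out the strict global maximum via the sign change of $g$, whereas the paper leaves these points implicit.
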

\begin{proof}
Fixed $u\in \mathcal{D}\setminus\{0\}$, define $j(t)=J(tu), \forall t\ge0$, i.e.,
\[
j(t)=\frac{t^2}{2}\|u\|_{\mathcal{H}}^2-\frac{t^2}{2}\int_Vu^2\log u^2d\mu -\frac{t^2\log t^2}{2}\int_Vu^2d\mu.
\]
Clearly, $j(0)=0$, and there exist $\delta_2>\delta_1>0$ such that $j(t)>0$ for $t\in (0,\delta_1)$ and $j(t)<0$ for $t\in(\delta_2,+\infty)$.
Hence, $j$ has a maximum at some $t_u\in [\delta_1, \delta_2]$, which implies that $j'(t_u)=0$.
Moreover, we note that
\begin{eqnarray*}
\frac{j'(t)}{t}&=&\frac{J'(tu)\cdot (tu)}{t^2}\\
&=&\|u\|_{\mathcal{H}}^2-\|u\|_2^2-\int_Vu^2\log u^2d\mu-\log t^2\|u\|_2^2,
\end{eqnarray*}
which implies that $\frac{j'(t)}{t}$ is strictly decreasing with respect to $t>0$. Hence $t_u$ is unique.
\end{proof}

Next we prove the following result.
\begin{lemma}
Supposed $(A_1)$ and $(A_2)$ hold. Then $d>0$ is achieved.
\end{lemma}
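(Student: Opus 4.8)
The plan is to establish the two assertions of the statement, $d>0$ and the attainment of $d$, in turn, the key preliminary observation being that on $\mathcal{N}$ the functional collapses to a multiple of the $L^2$-norm. Indeed, for $u\in\mathcal{N}$ the identity $J'(u)\cdot u=0$ reads $\int_V(\vert\nabla u\vert^2+a(x)u^2)d\mu=\int_V u^2\log u^2\,d\mu$, and inserting this into $J$ gives $J(u)=\frac{1}{2}\|u\|_2^2$ for every $u\in\mathcal{N}$. Hence $d=\frac{1}{2}\inf_{u\in\mathcal{N}}\|u\|_2^2$, and the whole argument reduces to controlling $\|u\|_2$ on $\mathcal{N}$. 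Throughout I would lean on the graph estimate $\|u\|_\infty\le\mu_{\min}^{-1/2}\|u\|_2$, which follows at once from $\mu(x)\ge\mu_{\min}$ and which plays here the role that the (here unavailable) logarithmic Sobolev inequality plays in the Euclidean setting, since it dominates every $L^p$-norm with $p\ge 2$ by the $L^2$-norm.

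To see that $d>0$, I would use that $\log u(x)^2\le\log\|u\|_\infty^2$ pointwise, whence $\int_V u^2\log u^2\,d\mu\le\|u\|_2^2\log\|u\|_\infty^2$. Combining this with the Nehari identity above and the crude lower bounds $\int_V\vert\nabla u\vert^2\,d\mu\ge 0$ and $\int_V a(x)u^2\,d\mu\ge a_0\|u\|_2^2$ gives $a_0\|u\|_2^2\le\|u\|_2^2\log\|u\|_\infty^2$ for every $u\in\mathcal{N}$, that is $\|u\|_\infty\ge e^{a_0/2}$. Feeding this back into $\|u\|_2^2\ge\mu_{\min}\|u\|_\infty^2$ produces the uniform lower bound $\|u\|_2^2\ge\mu_{\min}e^{a_0}$, so that $d\ge\frac{1}{2}\mu_{\min}e^{a_0}>0$.

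For the attainment I would take a minimizing sequence $\{u_k\}\subset\mathcal{N}$, so that $\|u_k\|_2^2=2J(u_k)\to 2d$ is bounded. To upgrade this to boundedness in $\mathcal{H}$ I would return to the Nehari identity and estimate the logarithmic term from above: discarding the nonpositive part of $u_k^2\log u_k^2$ on $\{\vert u_k\vert<1\}$ and using $s^2\log s^2\le\frac{2}{\delta}s^{2+\delta}$ for $s\ge 1$ and a small $\delta>0$, I obtain $\int_V u_k^2\log u_k^2\,d\mu\le C\|u_k\|_{2+\delta}^{2+\delta}\le C\|u_k\|_2^{2+\delta}$, which is bounded; hence $\|u_k\|_{\mathcal{H}}$ is bounded. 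Lemma \ref{lemA2} then yields a subsequence with $u_k\rightharpoonup u$ in $\mathcal{H}$, $u_k\to u$ in $L^p(V)$ for all $2\le p\le\infty$, and pointwise on $V$, and since $\|u\|_2^2=2d>0$ we have $u\neq 0$.

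The main obstacle, and the heart of the proof, is the behaviour of the logarithmic term under this convergence: because $s\mapsto s^2\log s^2$ changes sign and is not controlled by any single power, I can only expect one-sided semicontinuity. I would establish $\limsup_k\int_V u_k^2\log u_k^2\,d\mu\le\int_V u^2\log u^2\,d\mu$ by splitting at level $1$: on $\{\vert u_k\vert\ge 1\}$ the integrand is nonnegative and dominated by $C\vert u_k\vert^{2+\delta}$, so $L^{2+\delta}$-convergence gives convergence there by a dominated/Vitali argument, while on $\{\vert u_k\vert<1\}$ the integrand is nonpositive and Fatou's lemma supplies the one-sided bound (and simultaneously shows $u\in\mathcal{D}$). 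Together with the weak lower semicontinuity of $\|\cdot\|_{\mathcal{H}}^2$ this gives $J(u)\le\liminf_k J(u_k)=d$. Since I cannot directly conclude $u\in\mathcal{N}$, I would finish by projecting: taking $t_0=t_u$ from Lemma \ref{unique} so that $t_0u\in\mathcal{N}$, the maximality property there gives $J(t_0u_k)\le J(u_k)$, and applying the same semicontinuity to $t_0u_k\to t_0u$ yields $J(t_0u)\le d$; as $t_0u\in\mathcal{N}$ forces $J(t_0u)\ge d$, we conclude $J(t_0u)=d$, so $d$ is attained (and in fact $t_0=1$).
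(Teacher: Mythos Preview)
Your proof is correct. The attainment step follows the paper's route in all essentials---boundedness of $\|u_k\|_{\mathcal{H}}$ from the Nehari identity, compactness via Lemma~\ref{lemA2}, one-sided semicontinuity of $\int_V u_k^2\log u_k^2\,d\mu$ by splitting into positive and negative parts and applying dominated convergence and Fatou respectively, and projection onto $\mathcal{N}$ via Lemma~\ref{unique}---the only cosmetic difference being that you pass to the limit in $J(t_0u_k)\le J(u_k)$ (the maximality from Lemma~\ref{unique}) whereas the paper first deduces $t_0\le 1$ from the sign of $J'(u_0)\cdot u_0$ and then closes with $d\le\tfrac12\|t_0u_0\|_2^2\le t_0^2 d$. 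The part that is genuinely different is your proof that $d>0$: the paper argues by contradiction \emph{after} the attainment step, combining Sobolev estimates to force a uniform positive lower bound on $\|u\|_{\mathcal{H}}$ over $\mathcal{N}$, while you use the graph inequality $\|u\|_2^2\ge\mu_{\min}\|u\|_\infty^2$ together with the pointwise bound $u^2\log u^2\le u^2\log\|u\|_\infty^2$ to extract directly from the Nehari identity the explicit estimate $d\ge\tfrac12\mu_{\min}e^{a_0}$. This buys you a quantitative lower bound and, by settling $d>0$ first, a clean justification of $u\neq 0$ (from $\|u\|_2^2=2d$) \emph{before} you need to invoke Lemma~\ref{unique}, sidestepping a small logical detour in the paper's ordering.
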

\begin{proof} By Lemma \ref{unique}, $\mathcal{N}\neq\O$.
Taking a minimizing sequence $\{u_k\}\subset\mathcal{N}$ of $J$ yields
\begin{equation}\label{eqd}
\lim_{k\to+\infty}J(u_k)=\lim_{k\to+\infty}\left[J(u_k)-J'(u_k)\cdot u_k\right]=\lim_{k\to+\infty}\frac{1}{2}\|u_k\|_2^2=d.
\end{equation}
By Lemma \ref{lemA2}, the H\"{o}lder's inequality and Young inequality, for any $\varepsilon\in (0,1)$, there exist $C_{\epsilon}, C'_{\epsilon}, C''_{\epsilon}>0$ such that
\[\begin{aligned}
\int_Vu_k^2\log u_k^2d\mu\leq&\int_V(u_k^2\log u_k^2)^+d\mu\leq C_\varepsilon\int_V\vert u_k\vert^{2+\varepsilon}d\mu\\
\leq&C_\varepsilon\left(\int_V\vert u_k\vert^2d\mu\right)^{\frac{1}{2}}\left(\int_V\vert u_k\vert^{2(1+\varepsilon)}d\mu\right)^{\frac{1}{2}}\\
\leq&C_\varepsilon'\|u_k\|_2\|u_k\|_{\mathcal{H}}^{1+\varepsilon}\\
\leq&\frac{1}{2}\|u_k\|_{\mathcal{H}}^2+C_\varepsilon''\|u_k\|_2^{\frac{2}{1-\varepsilon}}.
\end{aligned}\]
Since $\{u_k\}\subset\mathcal{N}$, we deduce that
\begin{equation}\label{eq1}
\|u_k\|_{\mathcal{H}}^2=\int_Vu_k^2\log u_k^2d\mu+\|u_k\|_2^2\leq\frac{1}{2}\|u_k\|_{\mathcal{H}}^2+C_\varepsilon''\|u_k\|_2^{\frac{2}{1-\varepsilon}}+\|u_k\|_2^2.
\end{equation}
This together with (\ref{eqd}) implies that $\{u_k\}$ is bounded in $\mathcal{H}$. Thus, by Lemma \ref{lemA2}, up to a subsequence, there exists $u_0\in\mathcal{H}$ such that
\[\begin{cases}
u_k\rightharpoonup u_0 ~ \mbox{ weakly in }\mathcal{H},\\
u_k\to u_0 ~ \mbox{ point-wisely in } V,\\
u_k\to u_0 ~ \mbox{ strongly in } L^p(V) \mbox{ for } p\in[2,+\infty].
\end{cases}\]
Then, by the weak-lower semi-continuity of norm and Fatou's lemma, together with the Lebesgue dominated convergence theorem, we get
\[\begin{aligned}
&\int_V\left(\vert\nabla u_0\vert^2+\left(a(x)+1\right)u_0^2\right)d\mu-\int_V\left(u_0^2\log u_0^2\right)^-d\mu\\
&\leq\liminf_{k\to\infty}\left[\int_V\left(\vert\nabla u_k\vert^2+\left(a(x)+1\right)u_k^2\right)dx-\int_V(u_k^2\log u_k^2)^-d\mu\right]\\
&=\liminf_{k\to\infty}\int_V\left[u_k^2+\left(u_k^2\log u_k^2\right)^+\right]d\mu\\
&=\int_Vu_0^2d\mu+\int_V(u_0^2\log u_0^2)^+d\mu.
\end{aligned}\]
It follows that
\begin{equation}\label{equ0}
\int_V\left(\vert \nabla u_0\vert^2+a(x)u_0^2\right)d\mu-\int_Vu_0^2\log u_0^2d\mu\leq0.
\end{equation}
In view of Lemma \ref{unique}, there exists a constant $t_0>0$ such that $t_0u_0\in\mathcal{N}$. By $(\ref{equ0})$, we have
\[\begin{aligned}
0=&J'_\lambda(t_0u_0)\cdot (t_0u_0)\\
=&t_0^2\int_V\left(\vert \nabla u_0\vert^2+a(x)u_0^2\right)d\mu-t_0^2\int_Vu_0^2\log u_0^2d\mu-t_0^2\log t_0^2\|u_0\|_2^2\\
\leq&-t_0^2\log t_0^2\|u_0\|_2^2,
\end{aligned}\]
which implies that $0<t_0\leq1$. Then
\[\begin{aligned}
d\leq& J(t_0u_0)=J(t_0u_0)-\frac{1}{2}J'(t_0u_0)\cdot (t_0u_0)=\frac{1}{2}\|t_0u_0\|_2^2\leq\liminf_{k\to\infty}\frac{t_0^2}{2}\|u_k\|_2^2\\
=&t_0^2\liminf_{k\to\infty}\left[J(u_k)-\frac{1}{2}J'(u_k)\cdot u_k\right]=t_0^2\liminf_{k\to\infty}J(u_k)=t_0^2d\leq d.
\end{aligned}\]
Hence $t_0=1$ and thus $J(u_0)=d$.

We claim that $d>0$. In fact, if $d=0$, then
\[
0=J(u_0)-J'(u_0)\cdot u_0=\frac{1}{2}\|u_0\|_2^2.
\]
By (\ref{eq1}), we have $\|u_0\|_{\mathcal{H}}=0$.

On the other hand, by Lemma \ref{lemA2}, for any $q>2$, there exists $C_{q}>0$ such that
\[
\|u_0\|_{\mathcal{H}}^2=\int_Vu_0^2\log u_0^2d\mu\leq\int_V(u_0^2\log u_0^2)^+d\mu\leq C_q\int_V\vert u_0\vert^qd\mu\leq C\|u_0\|_{\mathcal{H}}^q,
\]
 which implies
\[
\|u_0\|_{\mathcal{H}}\geq \left(\frac{1}{C}\right)^{\frac{1}{q-2}}>0.
\]
Contradiction! Hence the claim holds.
\end{proof}

The following lemma is crucial, which will completes the proof of Theorem \ref{th1.1}.
\begin{lemma}
If $J(u)=c$ for some $u\in\mathcal{N}$, then $u$ is a weak solution of equation (\ref{eq-eq1}).
\end{lemma}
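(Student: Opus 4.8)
The goal is to show that the minimizer furnished by the previous lemma, i.e. $u\in\mathcal{N}$ attaining $c=\inf_{\mathcal{N}}J=d$, is a critical point in the sense of Definition \ref{def1}: $J'(u)\cdot v=0$ for every $v\in\mathcal{D}$. Since $J$ is not $C^1$ on $\mathcal{H}$, a direct Lagrange-multiplier argument on $\mathcal{N}$ is unavailable. Instead I would argue through the fibering map provided by Lemma \ref{unique} together with an envelope identity, establishing the claim first for directions $\varphi\in C_c(V)$ and then upgrading to all of $\mathcal{D}$. The structural fact I would lean on throughout is that membership $u\in\mathcal{N}$ means exactly $J'(u)\cdot u=0$, so that the (a priori unknown) derivative of the fibering parameter will cancel in the final computation.

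The crux is a differentiability statement along compactly supported directions, and this is where the failure of $J$ to be $C^1$ genuinely bites. Fixing $\varphi\in C_c(V)$, I would first record that $r\mapsto r^2\log r^2$ lies in $C^1(\mathbb{R})$ with derivative $2r(\log r^2+1)$, continuous at $r=0$. Because $u+s\varphi$ differs from $u$ only on the finite set $\mathrm{supp}\,\varphi$, the map $s\mapsto\int_V(u+s\varphi)^2\log(u+s\varphi)^2\,d\mu$ is a finite sum of $C^1$ functions plus a constant, hence $C^1$; combined with the smooth quadratic part, $s\mapsto J(u+s\varphi)$ is $C^1$ near $0$ and $\tfrac{d}{ds}J(u+s\varphi)|_{s=0}=J'(u)\cdot\varphi$. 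This finite-support reduction is precisely what sidesteps the absence of an $L^1$-embedding under $(A_1)$–$(A_2)$, which would otherwise obstruct dominating the logarithmic integrand.

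Next I would set $\Phi(t,s)=J(t(u+s\varphi))$ and use the explicit scaling formula from Lemma \ref{unique}; since $\|u+s\varphi\|_{\mathcal{H}}^2$, $\|u+s\varphi\|_2^2$ and $\int_V(u+s\varphi)^2\log(u+s\varphi)^2\,d\mu$ are all $C^1$ in $s$, $\Phi$ is $C^1$ in $(t,s)$ for $t>0$. The Nehari equation $\partial_t\Phi(t,s)=0$ determines the projection $t(s)$ with $t(s)(u+s\varphi)\in\mathcal{N}$; as $\partial_t^2\Phi(1,0)=-2\|u\|_2^2\neq0$ (equivalently, $j'(t)/t$ is strictly decreasing, exactly as in Lemma \ref{unique}), the implicit function theorem gives a $C^1$ map $s\mapsto t(s)$ with $t(0)=1$, coinciding with the unique projection of Lemma \ref{unique}. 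Now put $\gamma(s)=\Phi(t(s),s)=\max_{t>0}J(t(u+s\varphi))\ge d=\gamma(0)$, the inequality holding because $t(s)(u+s\varphi)\in\mathcal{N}$ and $d=\inf_{\mathcal{N}}J$. Thus $s=0$ minimizes the $C^1$ function $\gamma$, so $\gamma'(0)=0$; by the chain rule $\gamma'(0)=\partial_t\Phi(1,0)\,t'(0)+\partial_s\Phi(1,0)$, and since $\partial_t\Phi(1,0)=J'(u)\cdot u=0$, the term $t'(0)$ drops out and $0=\gamma'(0)=\partial_s\Phi(1,0)=J'(u)\cdot\varphi$. Hence $J'(u)\cdot\varphi=0$ for all $\varphi\in C_c(V)$.

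Finally I would pass from $C_c(V)$ to $\mathcal{D}$. Testing against the point mass $\varphi=\mathbf{1}_{x_0}$ recovers the pointwise equation at every vertex exactly as in Proposition \ref{pro-pointsolution}, so $u$ solves $-\Delta u+a(x)u=u\log u^2$ pointwise. To obtain the full weak formulation, I would multiply this identity by an arbitrary $v\in\mathcal{D}$ and sum over $V$: the discrete Green formula gives $\int_V v(-\Delta u)\,d\mu=\int_V\Gamma(u,v)\,d\mu$ (the series converges since $\|\nabla u\|_2,\|\nabla v\|_2<\infty$), the term $\int_V a\,uv\,d\mu$ converges absolutely by Cauchy–Schwarz using $a+1\ge0$ and $u,v\in\mathcal{H}$, and $\int_V uv\log u^2\,d\mu$ is well defined for $u,v\in\mathcal{D}$ by Definition \ref{def1}. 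Collecting the terms yields $J'(u)\cdot v=0$ for every $v\in\mathcal{D}$, so $u$ is a weak solution. The only delicate points are the differentiation of the logarithmic term (handled by the $C^1$ regularity of $r^2\log r^2$ and the finite-support choice of test function) and the nondegeneracy of $t(s)$ at $s=0$ (guaranteed by the strict monotonicity already recorded in Lemma \ref{unique}).
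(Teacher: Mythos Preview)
Your argument is correct and takes a genuinely different route from the paper. The paper argues by contradiction: assuming some $\phi\in C_c(V)$ with $J'(u)\cdot\phi\le -1$, it builds a deformed path $s\mapsto su+\varepsilon\eta(s)\phi$, shows the energy along it stays strictly below $c$, and then uses an intermediate value argument on $F(s)=J'(v)\cdot v$ to locate a point of $\mathcal{N}$ on that path, contradicting the minimality of $c$. You instead work directly: the Nehari projection $s\mapsto t(s)$ is obtained via the implicit function theorem (using the nondegeneracy $\partial_t^2\Phi(1,0)=-2\|u\|_2^2\neq 0$, which is exactly the strict monotonicity of $j'(t)/t$ in Lemma~\ref{unique}), and the envelope identity $\gamma'(0)=J'(u)\cdot\varphi$ falls out because the fibering constraint kills the $t'(0)$ term. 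Both proofs rest on the same structural observation---restricting to $\varphi\in C_c(V)$ restores $C^1$ smoothness of $s\mapsto J(u+s\varphi)$ since the logarithmic integral changes only on a finite set---but yours packages this more transparently and avoids the cutoff function and the auxiliary path. The paper's approach, in turn, is closer to the quantitative deformation arguments standard in nonsmooth critical point theory and does not require any regularity of the Nehari projection.

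One remark on your final step: the passage from $C_c(V)$ to all of $\mathcal{D}$ via the pointwise equation and the discrete Green formula is the same place where the paper is also informal (it simply begins the contradiction with $\phi\in C_c(V)$, implicitly invoking density of $C_c(V)$ in $\mathcal{D}$). Your justification ``the series converges since $\|\nabla u\|_2,\|\nabla v\|_2<\infty$'' guarantees absolute convergence of $\int_V\Gamma(u,v)\,d\mu$ but not, without further comment, the validity of rearranging the double sum to get $\int_V v(-\Delta u)\,d\mu=\int_V\Gamma(u,v)\,d\mu$. Since testing against $C_c(V)$ already yields the pointwise equation (exactly as in Proposition~\ref{pro-pointsolution}), and this is the operative notion of solution for the theorem, the point is minor---but it is worth flagging that neither proof spells it out fully.
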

\begin{proof}
By contradiction, we can find a function $\phi\in C_c(V)$ such that
\[
\int_V\left(\nabla u\nabla\phi+a(x)u\phi\right)d\mu-\int_Vu\phi\log u^2d\mu\leq-1,
\]
which implies that, for some $\varepsilon>0$ small enough,
\begin{eqnarray}\label{11-23-1}
J'\left(su+\sigma\phi\right)\cdot\phi\leq-\frac{1}{2},\ \mbox{for all }\vert s-1\vert+\vert\sigma\vert\leq\epsilon.
\end{eqnarray}

In what follows, we estimate $\sup\limits_{s}J\left(su+\varepsilon\eta(s)\phi\right)$, where $\eta$ is a cut-off function such that
\[\eta(s)=
\begin{cases}
1 ~~~ \mbox{if }\vert s-1\vert\leq\frac{1}{2}\varepsilon,\\
0 ~~~ \mbox{if }\vert s-1\vert\geq\varepsilon.
\end{cases}\]
Since $\eta(s)=0$ for $\vert s-1\vert\geq\varepsilon$, it suffices to consider the case $\vert s-1\vert\leq\varepsilon$. Indeed, by (\ref{11-23-1}) we obtain
\[\begin{aligned}
J\left(su+\varepsilon\eta(s)\phi\right)=&J\left(su+\varepsilon\eta(s)\phi\right)-J(su)+J(su)\\
=&J(su)+\int_0^1 J'\left(su+\tau\varepsilon\eta(s)\phi\right)\cdot\left(\varepsilon\eta(s)\phi\right) d\tau\\
=&J(su)+\varepsilon\eta(s)\int_0^1J'\left(su+\tau\varepsilon\eta(s)\phi\right)\cdot \phi d\tau\\
\leq&J(su)-\frac{1}{2}\varepsilon\eta(s),
\end{aligned}\]
which implies that
\[
J\left(u+\varepsilon\eta(1)\phi\right)\leq J(u)-\frac{1}{2}\varepsilon\eta(1)=J(u)-\frac{1}{2}\varepsilon.
\]
In addition, by Lemma \ref{unique} we have $J(su)<J(u)$ for all $s\neq1$. Then
\[
J\left(su+\varepsilon\eta(s)\phi\right)\leq J(su)<J(u)\ \mbox{for all } s\neq1.
\]
Hence, we deduce $J\left(su+\varepsilon\eta(s)\phi\right)<J(u)=c$. Therefore, for $0<\varepsilon<1-\varepsilon$, there exists $s_0\in [\varepsilon, 2-\varepsilon]$ such that
\[
J\left(s_0u+\varepsilon\eta(s)\phi\right)=\sup\limits_{\varepsilon\leq s\leq2-\varepsilon}J\left(su+\varepsilon\eta(s)\phi\right)<c.
\]
Denote $v=su+\varepsilon\eta(s)\phi$ and define $F(s)=J'(v)\cdot v$. It is easily seen that
$F(\varepsilon)>0,\ F(2-\varepsilon)<0$, which implies that there exists $s_1\in[\varepsilon,2-\varepsilon]$ such that $F(s_1)=J'(v)\cdot v\vert_{s=s_1}=0$. Let $\widetilde{u}=s_1u+\varepsilon\eta(s_1)\phi$. Clearly, $\widetilde{u}\in\mathcal{N}$ and $J(\widetilde{u})<c$. This gives a contradiction to the definition of $c$.
\end{proof}

\section{Proof of Theorem \ref{th1.2}}\label{Proof of Theorem 1.2}

This section is devoted to proving Theorem \ref{th1.2}. We recall that $J(u)=\frac{1}{2}\|u\|_{\mathcal{H}}^2-\frac{1}{2}\int_Vu^2\log u^2d\mu$. Note that, for any $0<\varepsilon<1$, there exists $C_{\epsilon}>0$ such that
\begin{equation}\label{eqLogleq}
\vert u^2\log u^2\vert \leq C_\epsilon\left(\vert u\vert^{2-\epsilon}+\vert u\vert^{2+\epsilon}\right).
\end{equation}
When $a(x)$ satisfies $(A_1)$ and $(A'_2)$, from Lemma \ref{lemA'2}, the embedding $\mathcal{H}\hookrightarrow L^p(V)$ is compact for $p\in[1,+\infty]$.
Then, in view of \cite[Lemma 2.16]{Willem1996}, we can easily conclude the following result.
\begin{proposition}\label{proposition}
Assume that $a(x)$ satisfies $(A_1)$ and $(A'_2)$. Then the functional $J\in C^1(\mathcal{H},\mathbb{R})$. Moreover, if $u\in\mathcal{H}$ is a critical point of $J$, then it is a weak solution of equation (\ref{eq-eq1}), i.e.,
\[
\int_V\left(\Gamma(u,v)+a(x)uv \right)\,d\mu=\int_Vuv\log u^2d\mu,\ \ \forall v\in\mathcal{H}.
\]
\end{proposition}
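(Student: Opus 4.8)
The goal is to establish Proposition~\ref{proposition}: under $(A_1)$ and $(A'_2)$, the functional $J$ is $C^1(\mathcal{H},\mathbb{R})$, and its critical points are weak solutions of~(\ref{eq-eq1}). My plan is to verify the hypotheses of the abstract differentiability criterion \cite[Lemma 2.16]{Willem1996} for the nonlinear part, and to treat the quadratic part separately as a bounded bilinear form.

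First I would split $J = \frac12\|\cdot\|_{\mathcal{H}}^2 - \frac12 \Psi$, where $\Psi(u) = \int_V u^2\log u^2\,d\mu$. The quadratic part is clearly $C^1$ on the Hilbert space $\mathcal{H}$ with derivative $u\mapsto \langle u,\cdot\rangle_{\mathcal{H}}$, so everything reduces to showing $\Psi \in C^1(\mathcal{H},\mathbb{R})$ with $\Psi'(u)\cdot v = 2\int_V uv\log u^2\,d\mu$. Here the two-sided bound~(\ref{eqLogleq}), namely $|u^2\log u^2|\le C_\varepsilon(|u|^{2-\varepsilon}+|u|^{2+\varepsilon})$, is the essential input: it controls the logarithmic nonlinearity by a sum of two pure power terms whose exponents straddle $2$. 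Fixing some $\varepsilon\in(0,1)$, I would set $g(s)=s\log s^2$ (with $g(0)=0$), so that $G(t)=\int_0^t g(s)\,ds = \tfrac12 t^2\log t^2 - \tfrac12 t^2$, and $\Psi$ differs from $\int_V G(u)\,d\mu$ only by the $C^1$ term $\frac12\|u\|_2^2$.

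Next I would verify the growth conditions required by \cite[Lemma 2.16]{Willem1996}: both the primitive $G$ and the nonlinearity $g$ are dominated by $C(|t|^{2-\varepsilon}+|t|^{2+\varepsilon})$, which is exactly what~(\ref{eqLogleq}) furnishes (the derivative estimate $|g(t)|\le C_\varepsilon'(|t|^{1-\varepsilon}+|t|^{1+\varepsilon})$ follows by the same elementary logarithmic bounds). Because Lemma~\ref{lemA'2} gives the \emph{compact}---hence in particular \emph{continuous}---embedding $\mathcal{H}\hookrightarrow L^p(V)$ for every $p\in[1,+\infty]$, I can accommodate both exponents $2\pm\varepsilon$ simultaneously in the relevant Lebesgue spaces. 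The abstract lemma then yields that the map $u\mapsto \int_V G(u)\,d\mu$ is of class $C^1$ with Gateaux-equal-Fréchet derivative $v\mapsto \int_V g(u)v\,d\mu$, and that this derivative is continuous in $u$. Assembling the pieces gives $J'(u)\cdot v = \int_V(\Gamma(u,v)+a(x)uv)\,d\mu - \int_V uv\log u^2\,d\mu$, so that $J'(u)=0$ is precisely the weak formulation asserted in the statement.

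The main obstacle I anticipate is ensuring that \cite[Lemma 2.16]{Willem1996}, which in \cite{Willem1996} is stated over a measure space such as $\mathbb{R}^N$, transfers cleanly to the discrete counting-type measure on $V$; this is really a matter of checking that the hypotheses (a Carath\'eodory nonlinearity with the stated two-power growth, mapping between the appropriate $L^p$ spaces) are measure-agnostic, which they are, so the transfer is routine once the embedding into \emph{both} $L^{2-\varepsilon}$ and $L^{2+\varepsilon}$ is in hand. A secondary point to handle carefully is the continuity of $\Psi'$: since the growth bound involves two different exponents, I would argue continuity by combining the continuity of the superposition (Nemytskii) operator associated with $g$ on each $L^{p}$ factor with the continuous embeddings from $\mathcal{H}$, rather than invoking a single power. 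The endpoint integrability at $p=1$ supplied by Lemma~\ref{lemA'2} (unavailable in the $(A_2)$ setting of Lemma~\ref{lemA2}, which only reaches $p\ge 2$) is exactly what makes the lower exponent $2-\varepsilon$ admissible and thereby renders $J$ well-defined and $C^1$ in this case.
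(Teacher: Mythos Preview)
Your proposal is correct and follows exactly the route the paper takes: the paper simply notes that under $(A_1)$ and $(A'_2)$ the compact embedding of Lemma~\ref{lemA'2} into $L^p(V)$ for all $p\in[1,+\infty]$, together with the growth estimate~(\ref{eqLogleq}), allows one to invoke \cite[Lemma~2.16]{Willem1996} to conclude $J\in C^1(\mathcal{H},\mathbb{R})$. Your write-up is in fact a faithful and more explicit unpacking of that one-sentence argument, including the correct identification that the sub-quadratic exponent $2-\varepsilon$ is precisely what forces the need for $(A'_2)$ rather than $(A_2)$.
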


Similar as Proposition \ref{pro-pointsolution}, we have following result.
\begin{proposition}
If $u\in\mathcal{H}$ is a weak solution of (\ref{eq-eq1}), then $u$ is a point-wise solution of equation (\ref{eq-eq1}).
\end{proposition}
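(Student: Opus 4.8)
The plan is to mirror the argument of Proposition \ref{pro-pointsolution}, with the space $\mathcal{D}$ replaced by $\mathcal{H}$. The starting point is the weak formulation supplied by Proposition \ref{proposition}: since $u\in\mathcal{H}$ is a weak solution, one has
\[
\int_V\left(\Gamma(u,v)+a(x)uv\right)d\mu=\int_Vuv\log u^2d\mu,\quad \forall\, v\in\mathcal{H}.
\]

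First I would observe that $C_c(V)\subset\mathcal{H}$. Indeed, any $\varphi\in C_c(V)$ is supported on a finite vertex set, and since $G$ is locally finite the length $\vert\nabla\varphi\vert$ is nonzero only on that set together with its finitely many neighbours; consequently $\int_V\left(\vert\nabla\varphi\vert^2+\left(a(x)+1\right)\varphi^2\right)d\mu$ reduces to a finite sum and is finite. Hence the displayed identity applies, in particular, to every test function $\varphi\in C_c(V)$.

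Next I would invoke the discrete Green (summation-by-parts) formula. For $\varphi\in C_c(V)$ all the sums involved are finite, so the order of summation may be interchanged to give
\[
\int_V\Gamma(u,\varphi)d\mu=\int_V(-\Delta u)\varphi\,d\mu,
\]
which recasts the weak identity as
\[
\int_V\left(-\Delta u+a(x)u\right)\varphi\,d\mu=\int_Vu\varphi\log u^2d\mu,\qquad\forall\,\varphi\in C_c(V).
\]

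Finally, for an arbitrary fixed $x_0\in V$ I would choose the test function $\varphi\in C_c(V)$ equal to $1$ at $x_0$ and $0$ elsewhere. Substituting this $\varphi$ collapses each integral to its value at $x_0$ and yields
\[
-\Delta u(x_0)+a(x_0)u(x_0)-u(x_0)\log\left(u(x_0)\right)^2=0.
\]
Since $x_0\in V$ was arbitrary, $u$ solves (\ref{eq-eq1}) point-wise. The only steps requiring attention are the justification of the summation-by-parts identity and the containment $C_c(V)\subset\mathcal{H}$; both follow at once from the local finiteness of $G$, so no genuine obstacle arises and the argument is essentially identical to that of Proposition \ref{pro-pointsolution}.
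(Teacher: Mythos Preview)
Your proposal is correct and follows exactly the approach the paper intends: the paper itself gives no separate proof but simply states ``Similar as Proposition \ref{pro-pointsolution}'', and your argument is precisely that adaptation, testing the weak identity from Proposition \ref{proposition} against the indicator function $\varphi=\mathbf{1}_{\{x_0\}}\in C_c(V)\subset\mathcal{H}$ after integrating by parts. If anything, your version is slightly cleaner in that you observe only the containment $C_c(V)\subset\mathcal{H}$ is needed (not density), which is all the argument actually uses.
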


 We recall that, for $c\in\mathbb{R}$, $J$ satisfies the Cerami condition at level $c$, if for any sequence $\{u_k\}\subset \mathcal{H}$ with
\begin{equation}\label{eqCerami}
J(u_k)\to c,\ \
(1+\|u_k\|_{\mathcal{H}})\|J'(u_k)\|_{\mathcal{H}'}\to0
\end{equation}
there is a subsequence $\{u_k\}$ such that $\{u_k\}$ converges strongly in $\mathcal{H}$. Any sequence $\{u_k\}\subset \mathcal{H}$ for which (\ref{eqCerami}) holds true is called a Cerami sequence of $J$ at level $c$.

To prove Theorem \ref{th1.2}, we shall apply the following version of mountain pass theorem, which provides the existence of a Cerami sequence at the mountain pass level.
\begin{lemma}\label{leMP}\cite{Schechter1991}
Let $X$ be a real Banach space, $\Phi\in C^1(X,\mathbb{R})$ satisfies
\begin{eqnarray*}
\max\{\Phi(0), \Phi(e)\le \alpha<\beta\le \inf\limits_{\|u\|_X=\rho}\Phi(u)
\end{eqnarray*}
for some $\alpha, \beta, \rho>0$ and $e\in X$ with $\|e\|_X>\rho$. Set
\[
c=\inf_{\gamma\in\Gamma}\max_{0\leq\tau\leq1}\Phi\left(\gamma(\tau)\right),
\]
where
\[
\Gamma=\left\{\gamma\in C\left([0,1],X\right):  \gamma(0)=0 \, \mbox{ and } \, \gamma(1)=e\right\}.
\]
Then there exists a Cerami sequence $\{u_k\}\subset \mathcal{H}$ of $\Phi$ at level $c$.
\end{lemma}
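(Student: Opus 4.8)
The plan is to prove this minimax principle by applying Ekeland's variational principle to the space of admissible paths, combined with a pseudo-gradient deformation in which the descent vector field is scaled by the factor $1+\|u\|_X$; this scaling is precisely what upgrades an ordinary Palais--Smale sequence into a Cerami sequence.

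First I would equip the path class $\Gamma$ with the complete metric space structure coming from the uniform metric $d(\gamma,\sigma)=\max_{\tau\in[0,1]}\|\gamma(\tau)-\sigma(\tau)\|_X$, and introduce the functional $\Psi(\gamma)=\max_{\tau\in[0,1]}\Phi(\gamma(\tau))$. Using $\Phi\in C^1(X,\mathbb{R})$ one checks that $\Psi$ is lower semicontinuous (in fact continuous) on $\Gamma$ and bounded below: the separation hypothesis $\max\{\Phi(0),\Phi(e)\}\le\alpha<\beta\le\inf_{\|u\|_X=\rho}\Phi$ forces every $\gamma\in\Gamma$ to cross the sphere $\{\|u\|_X=\rho\}$, whence $\Psi(\gamma)\ge\beta$ and $c=\inf_{\Gamma}\Psi\ge\beta>\alpha$. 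Applying Ekeland's variational principle to $\Psi$ then yields, for each $n$, a path $\gamma_n\in\Gamma$ with $\Psi(\gamma_n)\le c+\tfrac1n$ and the almost-minimality property $\Psi(\sigma)\ge\Psi(\gamma_n)-\tfrac1n\,d(\sigma,\gamma_n)$ for every $\sigma\in\Gamma$.

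The heart of the argument is to extract from each $\gamma_n$ a point $u_n$ on its image at which $\Phi(u_n)\to c$ and $(1+\|u_n\|_X)\|\Phi'(u_n)\|_{X'}\to0$. To this end I would examine the peak set $M_n=\{\tau:\Phi(\gamma_n(\tau))=\Psi(\gamma_n)\}$, which is nonempty and, since $\Phi(0),\Phi(e)<c$, is contained in the interior of $[0,1]$. Arguing by contradiction, suppose $(1+\|u\|_X)\|\Phi'(u)\|_{X'}\ge\delta>0$ at every peak point of $\gamma_n$ for some fixed $n$. Using a locally Lipschitz pseudo-gradient vector field for $\Phi$, rescaled to a field $W$ satisfying $\|W(u)\|_X\le 1+\|u\|_X$ together with a lower bound on $\langle\Phi'(u),W(u)\rangle$ near the peak, the associated flow $\eta(t,\cdot)$ is globally defined: the differential inequality $\|\partial_t\eta\|_X\le 1+\|\eta\|_X$ gives, via Gronwall, $1+\|\eta(t,u)\|_X\le e^{t}(1+\|u\|_X)$, so no finite-time blow-up occurs. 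Cutting the flow off away from the peak set, so that the endpoints $0$ and $e$ stay fixed, produces a competitor path $\sigma$ with $\Psi(\sigma)\le\Psi(\gamma_n)-c_0$ while $d(\sigma,\gamma_n)$ is controlled by the flow parameter; for small flow time this contradicts the Ekeland inequality. Hence there is a peak point $u_n=\gamma_n(\tau_n)$ with $(1+\|u_n\|_X)\|\Phi'(u_n)\|_{X'}<\tfrac1n$, and since $u_n$ is a peak we also have $c\le\Psi(\gamma_n)=\Phi(u_n)\le c+\tfrac1n$. Letting $n\to\infty$ gives the desired Cerami sequence.

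I expect the main obstacle to be exactly this weighted deformation step: one must manufacture a descent field with the bound $\|W(u)\|_X\le 1+\|u\|_X$ (rather than the usual unit bound), verify global existence of its flow via the Gronwall estimate, and balance the resulting decrease of $\Psi$ against the displacement $d(\sigma,\gamma_n)$ so that the Ekeland almost-minimality is violated. The factor $1+\|u\|_X$ is essential, since an unweighted field could blow up in finite time once $\|u\|_X$ is large, and it is precisely this choice that converts the a priori Palais--Smale-type information into the stronger Cerami control $(1+\|u_n\|_X)\|\Phi'(u_n)\|_{X'}\to0$.
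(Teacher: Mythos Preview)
The paper does not prove this lemma: it is simply quoted from Schechter's 1991 paper and used as a black box, so there is no in-paper proof to compare against. Your sketch is a faithful outline of the standard route to such Cerami-type mountain pass results---Ekeland's variational principle on the path space $(\Gamma,d)$ combined with a weighted pseudo-gradient flow whose descent field satisfies $\|W(u)\|_X\le 1+\|u\|_X$, the Gronwall bound guaranteeing global existence and the weight producing the Cerami (rather than merely Palais--Smale) conclusion. This is essentially the argument one finds in Schechter's original paper and in later expositions; the identification of the weighted deformation as the crux is correct.

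One small point worth tightening if you write this out in full: rather than fixing a single $n$ and assuming the Cerami quantity is bounded below by some $\delta$ only at the peak points of $\gamma_n$, it is cleaner to assume the global negation of the conclusion---that $(1+\|u\|_X)\|\Phi'(u)\|_{X'}\ge 2\varepsilon$ for all $u$ with $|\Phi(u)-c|\le 2\varepsilon$---and then run the deformation on a path $\gamma_n$ with $\Psi(\gamma_n)$ sufficiently close to $c$. This avoids the awkwardness of building a cutoff that depends on the particular peak set $M_n$ and makes the quantitative balance between the $\Psi$-drop and the displacement $d(\sigma,\gamma_n)$ more transparent. Either formulation works, but the global version is what most references (including Schechter) actually carry out.
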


Firstly, we show $J$ admits a mountain pass geometry.
\begin{lemma}\label{lemMpg1}
\begin{description}
\item[(i)]There exists positive constants $\rho$ and $\delta$ such that $J(u)\geq\delta$ for all $u\in \mathcal{H}$ with $\|u\|_{\mathcal{H}}=\rho$.
\item[(ii)]There exists $\varphi\in\mathcal{H}\setminus \{0\}$ such that $J(t\varphi)\to-\infty$ as $t\to+\infty$.
\end{description}
\end{lemma}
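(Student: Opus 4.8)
The plan is to treat the two parts separately, each reducing to the continuous embedding $\mathcal{H}\hookrightarrow L^p(V)$ (valid for all $p\in[1,+\infty]$ by Lemma \ref{lemA'2}) together with elementary estimates on $s\mapsto s^2\log s^2$. The key observation throughout is that the sign-indefiniteness of the logarithmic term is harmless: for the lower bound in (i) only its positive part matters, while for (ii) its leading growth is governed by the explicit factor $t^2\log t^2$.

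For part (i), I would bound the logarithmic term from above using only its positive part. Since $(u^2\log u^2)^+\le C_\varepsilon|u|^{2+\varepsilon}$ for each fixed $\varepsilon\in(0,1)$, which follows from (\ref{eqLogleq}), we obtain
\[
\int_V u^2\log u^2\,d\mu\le\int_V(u^2\log u^2)^+\,d\mu\le C_\varepsilon\|u\|_{2+\varepsilon}^{2+\varepsilon}\le C\|u\|_{\mathcal{H}}^{2+\varepsilon},
\]
the last inequality using $\mathcal{H}\hookrightarrow L^{2+\varepsilon}(V)$. Substituting into the definition of $J$ gives
\[
J(u)\ge\frac12\|u\|_{\mathcal{H}}^2-\frac{C}{2}\|u\|_{\mathcal{H}}^{2+\varepsilon}=\frac12\|u\|_{\mathcal{H}}^2\bigl(1-C\|u\|_{\mathcal{H}}^{\varepsilon}\bigr).
\]
Choosing $\rho>0$ small enough that $1-C\rho^{\varepsilon}>0$ and setting $\delta:=\frac12\rho^2(1-C\rho^{\varepsilon})>0$, we conclude $J(u)\ge\delta$ whenever $\|u\|_{\mathcal{H}}=\rho$.

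For part (ii), I would fix any $\varphi\in\mathcal{H}\setminus\{0\}$ and compute $J(t\varphi)$ explicitly, exactly as in the proof of Lemma \ref{unique}. Splitting $\log(t\varphi)^2=\log t^2+\log\varphi^2$ yields
\[
J(t\varphi)=\frac{t^2}{2}\|\varphi\|_{\mathcal{H}}^2-\frac{t^2}{2}\int_V\varphi^2\log\varphi^2\,d\mu-\frac{t^2\log t^2}{2}\|\varphi\|_2^2.
\]
As $t\to+\infty$, the term $-\frac{t^2\log t^2}{2}\|\varphi\|_2^2$ dominates, since $\|\varphi\|_2^2>0$ and $t^2\log t^2$ grows faster than the remaining $O(t^2)$ terms; hence $J(t\varphi)\to-\infty$.

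I expect no serious obstacle here, as the routine estimates carry all the weight. The only points requiring care are that the bound in part (i) genuinely uses only the positive part of $u^2\log u^2$ (so that the indefinite sign does not spoil the lower estimate), and that in part (ii) the $C^1$ regularity of $J$ on $\mathcal{H}$ from Proposition \ref{proposition} guarantees every integral above is finite for each $\varphi\in\mathcal{H}$, so that the explicit formula for $J(t\varphi)$ is legitimate.
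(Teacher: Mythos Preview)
Your proof is correct and follows essentially the same route as the paper: for (i) you bound the positive part of $u^2\log u^2$ by $C_\varepsilon|u|^{2+\varepsilon}$ and invoke the embedding $\mathcal{H}\hookrightarrow L^{2+\varepsilon}(V)$, exactly as the paper does with a generic exponent $q>2$; for (ii) you expand $J(t\varphi)$ via $\log(t\varphi)^2=\log t^2+\log\varphi^2$ and observe the $-\tfrac{t^2\log t^2}{2}\|\varphi\|_2^2$ term dominates, which is verbatim the paper's argument. The only cosmetic remark is that the bound $(u^2\log u^2)^+\le C_\varepsilon|u|^{2+\varepsilon}$ is not literally (\ref{eqLogleq}) but the sharper elementary fact that the positive part vanishes for $|u|\le 1$; this is harmless and the paper uses the same inequality without further comment.
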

\begin{proof}
For (i), by the definition of $J$ and Lemma \ref{lemA'2}, for any $q>2$, there exist constants $C_q, C>0$ such that
\[\begin{aligned}
J(u)=&\frac{1}{2}\|u\|_{\mathcal{H}}^2-\frac{1}{2}\int_Vu^2\log u^2d\mu\\
\geq&\frac{1}{2}\|u\|_{\mathcal{H}}^2-\frac{1}{2}\int_V(u^2\log u^2)^+d\mu\\
\geq&\frac{1}{2}\|u\|_{\mathcal{H}}^2-\frac{C_q}{2}\|u\|_q^q\\
\geq&\frac{1}{2}\|u\|_{\mathcal{H}}^2-\frac{C}{2}\|u\|_{\mathcal{H}}^q.
\end{aligned}\]
Taking $\rho>0$ small enough, there exits a constant $\delta>0$ such that
\[
J(u)\geq\delta
\]
for all $u\in \mathcal{H}$ with $\|u\|_{\mathcal{H}}=\rho$.

For (ii), for any $\varphi\in \mathcal{H}\setminus \{0\}$, we have
\[
J(t\varphi)=\frac{t^2}{2}\|\varphi\|_{\mathcal{H}}^2-\frac{t^2}{2}\int_V\varphi^2\log \varphi^2d\mu-\frac{t^2}{2}\log t^2\|\varphi\|_2^2.
\]
This shows that $J(t\varphi)\to-\infty$ as $t\to+\infty$.
\end{proof}

Next, we prove that $J$ satisfies the Cerami condition.
\begin{lemma}\label{lemPS}
The functional $J$ satisfies the Cerami condition at any level $c>0$.
\end{lemma}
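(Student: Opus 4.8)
The plan is to verify the two defining properties of the Cerami condition in turn: first that every Cerami sequence $\{u_k\}$ at level $c>0$ is bounded in $\mathcal{H}$, and then that this boundedness, combined with the compactness supplied by Lemma \ref{lemA'2}, forces a strongly convergent subsequence. So I would start from a sequence $\{u_k\}\subset\mathcal{H}$ with $J(u_k)\to c$ and $(1+\|u_k\|_{\mathcal{H}})\|J'(u_k)\|_{\mathcal{H}'}\to 0$, noting at once that $\|J'(u_k)\|_{\mathcal{H}'}\to0$ and $J'(u_k)\cdot u_k\to0$.

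For boundedness I would exploit the algebraic identity obtained by pairing $J'$ with $u_k$. A direct computation gives $J(u_k)-\frac12 J'(u_k)\cdot u_k=\frac12\|u_k\|_2^2$, exactly as in the Nehari analysis (cf. (\ref{eqd})). Since the left-hand side converges to $c$ and $J'(u_k)\cdot u_k\to0$, this yields $\|u_k\|_2^2\to 2c$, so $\{u_k\}$ is bounded in $L^2(V)$. To upgrade this to boundedness in $\mathcal{H}$ I would reuse the Young/H\"older estimate already employed in the proof that $d>0$: for small $\varepsilon\in(0,1)$ one has $\int_V u_k^2\log u_k^2\,d\mu\le\frac12\|u_k\|_{\mathcal{H}}^2+C''_\varepsilon\|u_k\|_2^{2/(1-\varepsilon)}$, where the embedding $\mathcal{H}\hookrightarrow L^{2(1+\varepsilon)}(V)$ from Lemma \ref{lemA'2} is used. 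Inserting this into the identity $\|u_k\|_{\mathcal{H}}^2=\|u_k\|_2^2+\int_V u_k^2\log u_k^2\,d\mu+J'(u_k)\cdot u_k$ and absorbing the $\frac12\|u_k\|_{\mathcal{H}}^2$ term on the left leaves $\frac12\|u_k\|_{\mathcal{H}}^2\le\|u_k\|_2^2+C''_\varepsilon\|u_k\|_2^{2/(1-\varepsilon)}+o(1)$, which is bounded because $\|u_k\|_2$ is. Hence $\{u_k\}$ is bounded in $\mathcal{H}$.

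With boundedness in hand, Lemma \ref{lemA'2} gives, up to a subsequence, $u_k\rightharpoonup u$ weakly in $\mathcal{H}$, $u_k\to u$ pointwise on $V$, and $u_k\to u$ strongly in $L^p(V)$ for every $p\in[1,+\infty]$. To obtain strong convergence in $\mathcal{H}$ I would test the difference $J'(u_k)-J'(u)$ against $u_k-u$. Since $\|J'(u_k)\|_{\mathcal{H}'}\to0$ and $\{u_k-u\}$ is bounded, $J'(u_k)\cdot(u_k-u)\to0$; and $J'(u)\cdot(u_k-u)\to0$ by weak convergence. Expanding the pairing gives
\[
(J'(u_k)-J'(u))\cdot(u_k-u)=\|u_k-u\|_{\mathcal{H}}^2-\|u_k-u\|_2^2-\int_V\big(u_k\log u_k^2-u\log u^2\big)(u_k-u)\,d\mu.
\]
The left-hand side tends to $0$ and $\|u_k-u\|_2^2\to0$ by $L^2$-convergence, so it remains to show the logarithmic integral vanishes. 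Using the pointwise bound behind (\ref{eqLogleq}), namely $|s\log s^2|\le C_\varepsilon(|s|^{1-\varepsilon}+|s|^{1+\varepsilon})$, together with H\"older's inequality and the strong $L^p$-convergence (valid for all $p\ge1$ by Lemma \ref{lemA'2}), each resulting piece is dominated by a bounded $L^{2(1\pm\varepsilon)}$-norm times $\|u_k-u\|_2\to0$, so the integral tends to $0$. Consequently $\|u_k-u\|_{\mathcal{H}}^2\to0$, i.e. $u_k\to u$ strongly in $\mathcal{H}$.

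The main obstacle is the boundedness step, since the logarithmic nonlinearity fails the (AR) condition and the usual $\theta$-homogeneity argument is unavailable. The identity $J(u_k)-\frac12 J'(u_k)\cdot u_k=\frac12\|u_k\|_2^2$ circumvents this by extracting $L^2$-control directly from the Cerami data, after which the subcritical Young estimate bootstraps to $\mathcal{H}$-boundedness. A secondary delicate point is the convergence of the logarithmic term in the strong-convergence step: the negative part of $u^2\log u^2$ is singular near $u=0$, but the full compact embedding into $L^p(V)$ for $p\ge1$ furnished by Lemma \ref{lemA'2} (in particular for $p<2$) is precisely what controls it.
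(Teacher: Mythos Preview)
Your proof is correct, and in the boundedness step it is actually more direct than the paper's. The paper argues by contradiction: assuming $\|u_k\|_{\mathcal{H}}\to\infty$, it normalizes $w_k=u_k/\|u_k\|_{\mathcal{H}}$, passes to a weak limit $w$, and treats separately the cases $w=0$ (using the Jeanjean-type device $J(t_ku_k)=\max_{t\in[0,1]}J(tu_k)$ to force $J(t_ku_k)\to+\infty$ while simultaneously bounding it above via $J(t_ku_k)-\tfrac12 J'(t_ku_k)\cdot(t_ku_k)=\tfrac12\|t_ku_k\|_2^2\le\tfrac12\|u_k\|_2^2$) and $w\neq0$ (where $\int_{V'}w_k^2\log u_k^2\,d\mu\to+\infty$ contradicts $J(u_k)/\|u_k\|_{\mathcal{H}}^2\to0$). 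You instead exploit the Cerami hypothesis head-on: since $|J'(u_k)\cdot u_k|\le(1+\|u_k\|_{\mathcal{H}})\|J'(u_k)\|_{\mathcal{H}'}\to0$, the identity $J(u_k)-\tfrac12 J'(u_k)\cdot u_k=\tfrac12\|u_k\|_2^2$ gives $L^2$-boundedness immediately, and the same Young/H\"older estimate already used in the paper's Section~\ref{Proof of Theorem 1.1} (cf.~(\ref{eq1})) upgrades this to $\mathcal{H}$-boundedness. This is shorter and makes transparent why the Cerami (rather than Palais--Smale) condition is being invoked; the paper's route, by contrast, is more robust and would survive perturbations of the nonlinearity for which the clean identity $J-\tfrac12 J'\cdot u=\tfrac12\|u\|_2^2$ fails.

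The strong-convergence half of your argument coincides with the paper's: both test $J'(u_k)-J'(u)$ against $u_k-u$, use Lemma~\ref{lemA'2} to get $L^p$-convergence, and control the logarithmic term via the pointwise bound (\ref{eqLogleq}) together with H\"older's inequality. Your remark that the $p<2$ range in Lemma~\ref{lemA'2} is exactly what handles the $|u|^{1-\varepsilon}$ (resp.~$|u|^{2-2\varepsilon}$) contribution is well taken.
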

\begin{proof}
Assume that $\{u_k\}\subset \mathcal{H}$ is a Cerami sequence of $J$. We claim that $\{u_k\}$ is bounded in $\mathcal{H}$.
If not, we may assume that $\{u_k\}$ is unbounded in $\mathcal{H}$. Set $w_k=\frac{u_k}{\|u_k\|_{\mathcal{H}}}$. Then, passing to a subsequence if necessary, we may assume that there exists $w\in\mathcal{H}$ such that
\[\begin{cases}
w_k\rightharpoonup w &\mbox{ in } \mathcal{H},\\
w_k\to w &\mbox{ point-wisely in } V,\\
w_k\to w &\mbox{ in } L^p(V), p\in[1,+\infty].
\end{cases}\]
We will distinguish the following two cases:
\begin{description}
\item[Case 1: $w=0$.]

Let $t_k\in[0,1]$ such that
\[
J(t_ku_k)=\max_{t\in[0,1]}J(tu_k).
\]
By the unboundedness of $\{u_k\}$, for any given $\tau>0$, there exists $N>0$ such that
\[
\frac{\tau}{\|u_k\|_{\mathcal{H}}}\in(0,1),\ k\geq N.
\]
Denote $\overline{w}_k=(4\tau)^{\frac{1}{2}}w_k$. By (\ref{eqLogleq}) and the Lebesgue dominate convergence theorem it follows that
\[\begin{aligned}
&\lim_{k\to\infty}\int_V(\overline{w}_k^2\log \overline{w}_k^2)d\mu\\
=&\lim_{k\to\infty}\left(4\tau\int_Vw_k^2\log w_k^2d\mu+4\tau\log(4\tau)\int_Vw_k^2d\mu\right)\\
=&4\tau\left(\int_V w^2\log w^2d\mu+\log(4\tau)\int_Vw^2d\mu\right)\\
=&0,
\end{aligned}\]
which implies that, for $k$ large enough,
\[\begin{aligned}
J(t_ku_k)\geq&J\left(\frac{(4\tau)^{\frac{1}{2}}}{\|u\|_{\mathcal{H}}}u_k\right)=J(\overline{w}_k)\\
=&\frac{1}{2}\|\overline{w}_k\|_{\mathcal{H}}^2-\frac{1}{2}\int_V\overline{w}_k^2\log\overline{w}_k^2d\mu\\
\ge&\tau.
\end{aligned}\]
Since $\tau>0$ is arbitrary, we deduce
\begin{eqnarray}\label{11-24-1}
\lim_{k\to\infty}J(t_ku_k)=+\infty.
\end{eqnarray}
However, by $J(0)=0$ we have $t_k\in(0,1)$. Then $\frac{d}{dt}J(tu_k)\vert_{t=t_k}=0$. Hence we obtain
\[\begin{aligned}
J(t_ku_k)=&J(t_ku_k)-\frac{1}{2} J'(t_ku_k)\cdot (t_ku_k)\\
=&\frac{1}{2}\int_V\vert t_ku_k\vert^2d\mu\\
\leq&\frac{1}{2}\int_Vu_k^2d\mu\\
=&J(u_k)-\frac{1}{2} J'(u_k)\cdot u_k\\
\leq&C,
\end{aligned}\]
which is contrary to (\ref{11-24-1}).
\item[Case 2: $w\neq0$.]
Set $V'=\{x\in V; w\neq0\}$. Then $\vert u_k(x)\vert\to+\infty$ point-wisely in $V'$. Since $\{\|u_k\|_{\mathcal{H}}\}$ is unbounded and $J(u_k)\leq c$, we have $\frac{J(u_k)}{\|u_k\|_{\mathcal{H}}^2}\to0$, i.e.,
\[
\frac{1}{2}-\frac{1}{2}\int_{V'}\frac{u_k^2\log u_k^2}{\|u_k\|_{\mathcal{H}}^2}d\mu-\frac{1}{2}\int_{V\setminus V'}\frac{u_k^2\log u_k^2}{\|u_k\|_{\mathcal{H}}^2}d\mu=o_k(1),
\]
which is equivalent to the following
\begin{equation}\label{eqwneq0}
\begin{aligned}
&C-\int_{\{x\in V\setminus V';\vert u_k(x)\vert\leq1\}}\frac{u_k^2\log u_k^2}{\|u_k\|_{\mathcal{H}}^2}d\mu\\
=&\int_{\{x\in V\setminus V';\vert u_k(x)\vert>1\}}\frac{u_k^2\log u_k^2}{\|u_k\|_{\mathcal{H}}^2}d\mu+\int_{V'}\frac{u_k^2\log u_k^2}{\|u_k\|_{\mathcal{H}}^2}d\mu.
\end{aligned}
\end{equation}
On the left-side of the (\ref{eqwneq0}), from the definition of $w_k$, we know $\vert u_k(x)\vert<+\infty$ point-wisely in $V\setminus V'$. By (\ref{eqLogleq}) and Lemma \ref{lemA'2} we deduce
\[\begin{aligned}
0\leq&\lim_{k\to\infty}\int_{\{x\in V\setminus V';\vert u_k(x)\vert \leq1\}}\frac{-u_k^2\log u_k^2}{\|u_k\|_{\mathcal{H}}^2}d\mu\\
\leq&\lim_{k\to\infty}\int_{\{x\in V\setminus V';\vert u_k(x)\vert\leq1\}}\frac{2C_\varepsilon\vert u_k\vert^{2-\varepsilon}}{\|u_k\|_{\mathcal{H}}^2}d\mu\\
\leq&\lim_{k\to\infty}\int_{\{x\in V\setminus V';\vert u_k(x)\vert\leq1\}}\frac{2C_\varepsilon\vert u_k\vert}{\|u_k\|_{\mathcal{H}}^2}d\mu\\
\leq&\lim_{k\to\infty}\frac{2C_\varepsilon C}{\|u_k\|_{\mathcal{H}}}\\
=&0.
\end{aligned}\]
On the right-side of the (\ref{eqwneq0}):
\[\begin{aligned}
&\lim_{k\to\infty}\left[\int_{\{x\in V\setminus V';\vert u_k(x)\vert>1\}}\frac{u_k^2\log u_k^2}{\|u_k\|_{\mathcal{H}}^2}d\mu+\int_{V'}\frac{u_k^2\log u_k^2}{\|u_k\|_{\mathcal{H}}^2}d\mu\right]\\
\ge&\lim_{k\to\infty}\int_{V'}w_k^2\log u_k^2d\mu=+\infty,
\end{aligned}\]
which provides a contradiction.
\end{description}
Thus $\{u_k\}$ is bounded in $\mathcal{H}$. Hence, by Lemma \ref{lemA'2}, there exists $u\in\mathcal{H}$ such that, up to a subsequence,
\[\begin{cases}
u_k\rightharpoonup u &\mbox{ in } \mathcal{H},\\
u_k\to u &\mbox{ point-wisely in } V,\\
u_k\to u &\mbox{ in } L^p(V), p\in[1,+\infty].
\end{cases}\]
Since
\begin{equation}\label{eqconver}
\begin{aligned}
&o_k(1)\|u_k-u\|_{\mathcal{H}}\\
=&J'(u_k)\cdot(u_k-u)-J'(u)\cdot(u_k-u)\\
=&\left[\int_V\left(\nabla u_k\nabla(u_k-u)+a(x)u_k(u_k-u)\right)d\mu-\int_Vu_k(u_k-u)\log u_k^2d\mu\right]\\
&-\left[\int_V\left(\nabla u\nabla(u_k-u)+a(x)u(u_k-u)\right)d\mu-\int_Vu(u_k-u)\log u_k^2d\mu\right]\\
=&\|u_k-u\|_{\mathcal{H}}^2-\int_V(u_k-u)u_k\log u_k^2d\mu+\int_V(u_k-u)u\log u^2d\mu-\|u_k-u\|_2^2.\\
\end{aligned}
\end{equation}
By using H\"{o}lder's inequality and Lemma \ref{lemA'2}, we obtain
\[\begin{aligned}
&\lim_{k\to\infty}\int_V(u_k-u)u_k\log u_k^2d\mu\\
\leq&\lim_{k\to\infty}\left(\int_V\vert u_k-u\vert^2d\mu\right)^{\frac{1}{2}}\left(\int_Vu_k^2(\log u_k^2)^2d\mu\right)^{\frac{1}{2}}\\
\leq&\lim_{k\to\infty}\left(\int_V\vert u_k-u\vert^2d\mu\right)^{\frac{1}{2}}\cdot \left(C_\varepsilon\int_V(\vert u_k\vert^{2-2\varepsilon}+\vert u_k\vert^{2+2\varepsilon})d\mu\right)^\frac{1}{2}\\
=&0.
\end{aligned}\]
Similarly, we get
\[
\lim_{k\to\infty}\int_V(u_k-u)u\log u^2d\mu=0.
\]
Letting $k\to\infty$ in (\ref{eqconver}), we have
\[
\lim_{k\to\infty}\|u_k-u\|_{\mathcal{H}}^2=0.
\]
The proof is complete.
\end{proof}

\begin{proof}[Completion of the proof of Theorem \ref{th1.1}.]
By Lemma \ref{lemMpg1}, taking $e=t_1\varphi$ for some $t_1>0$ large enough, we can apply Lemma \ref{leMP} to conclude that there exists
a Cerami sequence of $J$ at level $c=\inf_{\gamma\in\Gamma}\max_{u\in\gamma}J(u)$, where $\Gamma$ is defined as in Lemma \ref{leMP}.
Using Lemma \ref{lemPS} it follows that $c$ is a critical value of $J$, i.e., there exists some $u\in\mathcal{H}\setminus \{0\}$ such that $J(u)=c$ and $J'(u)=0$.

To get ground state solution, we denote by $K$ the critical set of $J$. Set
\[
m=\inf\left\{J(u):  u\in K\setminus\{0\}\right\}.
\]
For any $u\in K$, we have
\[
J(u)=J(u)-\frac{1}{2}J'(u)\cdot u=\frac{1}{2}\|u\|_2^2\geq0,
\]
which implies $m\geq0$. On the other hand, using the Fatou's lemma, we have
\[\begin{aligned}
J(u)=&J(u)-\frac{1}{2}J'(u)\cdot u\\
=&\frac{1}{2}\|u\|_2^2\\
\leq&\frac{1}{2}\liminf_{k\to\infty}\|u_k\|_2^2\\
=&\liminf_{k\to\infty}\left[J(u_k)-\frac{1}{2}J'(u_k)\cdot u_k\right]\\
=&c.
\end{aligned}\]
Therefore, $0\leq m\leq c$.

Next, let $\{v_k\}$  be a sequence of non-trivial critical points of $J$ satisfying
\[
J(v_k)\to m.
\]
By Lemma \ref{lemPS} we can see that $v_k$ converges to some $v\neq0$.  Using the Fatou's lemma again, we obtain
\[
m\leq J(v)\leq\liminf_{k\to\infty}J(v_k)=m.
\]
Hence $J(v)=m$ and $J'(v)=0$.
\end{proof}

\section*{Appendix}
In this appendix, we present two examples to show that there exists $u\in H^1(V)$ but $\int_{V}u^2\log u^2dx=-\infty$.

\begin{example}
We consider a connected locally finite graph $G=(V,E)$ such that $V:=\mathbb{N} \cup \{0\}$, where $\mathbb{N}$ denotes the set of natural numbers.
Fixed $x_0=0\in V$. Let
\[
u(x)=
\begin{cases}
\left(\vert x\vert\log \vert x\vert\right)^{-1}, ~ &\vert x\vert\geq3,\\
0, ~ &\vert x\vert\leq2,
\end{cases}
\]
where $\vert x\vert:=d(x,x_0)$, the measure
\[
\mu(x)=
\begin{cases}
x, ~ &x\geq1,\\
1, ~ &x=0.
\end{cases}
\]
For simplicity, we assume that $\omega_{xy}=1$.

Let's recall the following fact
\[
\sum_{n=2}^{\infty}\frac{1}{n(\log n)^p}
\begin{cases}
<\infty,~~~&\mbox{ if }p>1,\\
=\infty,~~~&\mbox{ if }0\leq p\leq1.
\end{cases}\]

First of all, we prove that $u\in H^1(V)$. By the definition of $u$ and $\mu$, we have
\[
\int_V u^2d\mu=\sum_{x\in \mathbb{N}}\mu(x)u^2(x)=\sum_{x\geq3}\frac{x}{x^2(\log x)^2}=\sum_{x\geq3}\frac{1}{x(\log x)^2}<\infty,
\]
and
\[\begin{aligned}
&\int_V\vert\nabla u\vert^2d\mu\\
=&\frac{1}{2}\sum_{x\in \mathbb{N}}\sum_{y\sim x}\left(u(y)-u(x)\right)^2\\
=&\frac{1}{2}\left(u(3)-u(2)\right)^2+\frac{1}{2}\left(u(2)-u(3)\right)^2+\frac{1}{2}\left(u(4)-u(3)\right)^2\\
&+\frac{1}{2}\sum_{x\geq4}\sum_{y\sim x}\left(u(y)-u(x)\right)^2\\
=&\frac{1}{3^2(\log3)^2}+\sum_{x\geq3}\left(\frac{1}{(x+1)\log (x+1)}-\frac{1}{x\log x}\right)^2\\
\leq&\frac{1}{3^2(\log3)^2}+\sum_{x\geq3}\left(\frac{1}{(x+1)^2\left(\log (x+1)\right)^2}+\frac{1}{x^2(\log x)^2}\right)\\
\leq&\frac{1}{3^2(\log3)^2}+\sum_{x\geq3}\frac{2}{x^2(\log x)^2}\\
<&\infty.
\end{aligned}\]

Next, we prove that $\int_Vu^2\log u^2d\mu=-\infty$. Note that
\[
u^2(x)\log u^2(x)=-\left(\frac{2}{x^2\log x}+\frac{2\log(\log x)}{x^2(\log x)^2}\right),
\]
we have
\[\begin{aligned}
\int_Vu^2\log u^2d\mu=&\sum_{x\in \mathbb{N}}\mu(x)u^2(x)\log u^2(x)\\
=&\sum_{x\geq3}x\cdot u^2(x)\log u^2(x)\\
=&-\left(\sum_{x\geq3}\frac{2}{x\log x}+\sum_{x\geq3}\frac{2\log(\log x)}{x(\log x)^2}\right)\\
:=&-(I+II).
\end{aligned}\]
Since $II>0$, it suffices to prove the following fact
\[
\sum_{x\geq3}\frac{2}{x\log x}=\infty.
\]
But this is obvious, and thus we completes the proof.
\end{example}

\begin{example}
Assume that $G=(V,E)$ is a connected locally finite graph, and there exist some constants $\mu_{\min},\ \mu_{\max}>0$ such that $0<\mu_{\min}\leq\mu(x)\leq\mu_{\max}$ for all $x\in V$.

Fixed $x_0\in V$. Let
\[
u(x)=
\begin{cases}
\left(\vert x\vert^{\frac{1}{2}}\log \vert x\vert\right)^{-1}, ~ &\vert x\vert\geq3,\\
0, ~ &\vert x\vert\leq2,
\end{cases}
\]
where $\vert x\vert:=d(x,x_0)$. Then $u(x)\in H^1(V)$ and $\int_Vu^2\log u^2d\mu=-\infty$. In fact, from the definition of $u$ and $\mu$, we have
\[
\int_V u^2d\mu=\sum_{x\in V}\mu(x)u^2(x)=\sum_{\vert x\vert\geq3}\frac{\mu(x)}{\vert x\vert\left(\log\vert x\vert\right)^2}\leq\mu_{\max}\sum_{\vert x\vert\geq3}\frac{1}{\vert x\vert\left(\log\vert x\vert\right)^2}<\infty,
\]
and
\[\begin{aligned}
&  \int_V\vert\nabla u\vert^2d\mu \\
=&\frac{1}{2}\sum_{x\in V}\sum_{y\sim x}\omega_{xy}\left(u(y)-u(x)\right)^2\\
=&\frac{1}{2}\sum_{\vert x\vert=2}\sum_{y\sim x \atop \vert y\vert=3}\frac{\omega_{xy}}{3(\log3)^2}+\frac{1}{2}\sum_{\vert x\vert=3}\sum_{y\sim x \atop \vert y\vert=2}\frac{\omega_{xy}}{3(\log3)^2}\\
&+\frac{1}{2}\sum_{\vert x\vert=3}\sum_{y\sim x \atop \vert y\vert=4}\omega_{xy}\left(\frac{1}{\sqrt{4}\log4}-\frac{1}{\sqrt{3}\log3}\right)^2+\frac{1}{2}\sum_{\vert x\vert\geq4}\sum_{y\sim x}\omega_{xy}\left(u(y)-u(x)\right)^2\\
=&\sum_{\vert x\vert=3}\sum_{y\sim x \atop \vert y\vert=2}\frac{\omega_{xy}}{3(\log3)^2}+\sum_{\vert x\vert\geq3}\sum_{y\sim x \atop \vert y\vert=\vert x\vert+1}\omega_{xy}\left(\frac{1}{\left(\vert x\vert+1\right)^{\frac{1}{2}}\log\left(\vert x\vert+1\right)}-\frac{1}{\vert x\vert^{\frac{1}{2}}\log\vert x\vert}\right)^2\\
\leq&\sum_{\vert x\vert=3}\sum_{y\sim x \atop \vert y\vert=2}\frac{\omega_{xy}}{3(\log3)^2}+\omega_{\max}\sum_{\vert x\vert\geq3}\sum_{y\sim x \atop \vert y\vert=\vert x\vert+1}\left(\frac{1}{\left(\vert x\vert+1\right)\left(\log\left(\vert x\vert+1\right)\right)^2}+\frac{1}{\vert x\vert\left(\log\vert x\vert\right)^2}\right)\\
\leq&\sum_{\vert x\vert=3}\sum_{y\sim x \atop \vert y\vert=2}\frac{\omega_{xy}}{3(\log3)^2}+\omega_{\max}\sum_{\vert x\vert\geq3}\sum_{y\sim x \atop \vert y\vert=\vert x\vert+1}\frac{2}{\vert x\vert\left(\log\vert x\vert\right)^2}\\
<&\infty.
\end{aligned}\]
In what follows, we prove that $\int_Vu^2\log u^2d\mu=-\infty$. By direct calculations, we have
\[\begin{aligned}
\int_Vu^2\log u^2d\mu=&\sum_{x\in V}\mu(x)u^2(x)\log u^2(x)\\
=&\sum_{\vert x\vert\geq3}\mu(x)u^2(x)\log u^2(x)\\
=&-\left(\sum_{\vert x\vert\geq3}\frac{\mu(x)}{\vert x\vert\log\vert x\vert}+\sum_{\vert x\vert\geq3}\frac{2\mu(x)\log\left(\log\vert x\vert\right)}{\vert x\vert\left(\log\vert x\vert\right)^2}\right)\\
=&-\infty.
\end{aligned}\]
\end{example}

\bmhead{Acknowledgments}
The research of Xiaojun Chang is supported by National Natural Science Foundation of China (No.11971095), while Duokui Yan is supported by National Natural Science Foundation of China (No.11871086). This work was done when Xiaojun Chang visited the Laboratoire de Math\'ematiques, Universit\'e de Bourgogne Franche-Comt\'e during the period from 2021 to 2022 under the support of China Scholarship Council (202006625034), and he would like to thank the Laboratoire for their support and kind hospitality.

\section*{Declarations}

\begin{itemize}
\item Conflict of interest: The authors declare that they have no competing interests.
\end{itemize}

\end{document}